\documentclass[a4paper,10pt]{article}
\usepackage{amsmath,amsthm,amssymb,amsfonts}
\usepackage{enumerate,amscd,amsxtra,MnSymbol}
\usepackage{mathrsfs}
\usepackage{color}
\usepackage[cmtip,all]{xy}
\usepackage{eucal}

\newtheorem{theorem}{Theorem}[section]
\newtheorem*{theorem*}{Theorem}
\newtheorem{lemma}[theorem]{Lemma}
\newtheorem{proposition}[theorem]{Proposition}
\newtheorem*{proposition*}{Proposition}
\newtheorem{corollary}[theorem]{Corollary}
\newtheorem*{corollary*}{Corollary}

\newtheorem{example}[theorem]{Example}
\newtheorem{definition}[theorem]{Definition}
\newtheorem{construction}[theorem]{Construction}
\newtheorem{definition*}{Definition}
\newtheorem{remark}[theorem]{Remark}
\newtheorem{remark*}{Remark}

\newtheorem{conjecture*}{Conjecture}

\newtheorem{notation}[theorem]{Notation}

\newcommand{\bE}{{\mathbf E}}

\newcommand{\bF}{{\mathbb F}}

\newcommand{\caR}{{\mathcal R}}
\newcommand{\caC}{{\mathcal C}}
\newcommand{\caD}{{\mathcal D}}
\newcommand{\caB}{{\mathcal B}}
\newcommand{\caA}{{\mathcal A}}

\newcommand{\caM}{{\mathcal M}}
\newcommand{\caN}{{\mathcal N}}
\newcommand{\caS}{{\mathcal S}}

\newcommand{\caH}{{\mathcal H}}

\newcommand{\caP}{{\mathcal P}}

\newcommand{\caU}{{\mathcal U}}

\newcommand{\rS}{{\mathrm S}}

\newcommand{\R}{{\mathrm R}}
\newcommand{\M}{{\mathrm M}}

\renewcommand{\L}{{\mathrm L}}

\newcommand{\K}{{\mathrm K}}

\newcommand{\integers}{{\mathbb Z}}

\newcommand{\op}{\mathrm{op}}
\newcommand{\dual}{\vee}

\newcommand{\Ho}{\mathsf{Ho}}

\newcommand{\B}{\mathrm{B}}

\newcommand{\s}{\mathsf{s}}

\newcommand{\A}{\mathbf{A}}

\newcommand{\G}{\mathrm{G}}

\newcommand{\Sp}{\mathrm{Sp}}

\newcommand{\f}{\mathsf{f}}

\newcommand{\colim}{\mathrm{colim}}

\newcommand{\Mod}{{\mathrm{Mod}}}

\newcommand{\X}{\mathrm{X}}

\renewcommand{\smash}{\wedge}

\newcommand{\h}{\mathsf{h}}

\renewcommand{\H}{\mathcal{H}}

\newcommand{\Sym}{\mathrm{Sym}}

\newcommand{\id}{\mathrm{id}}
\renewcommand{\u}{\mathrm{u}}

\newcommand{\Cat}{\mathsf{Cat}}

\newcommand{\tr}{\mathrm{tr}}

\newcommand{\Pic}{\mathrm{Pic}}
\newcommand{\pic}{\mathrm{pic}}

\renewcommand{\Pr}{\mathsf{Pr}}

\newcommand{\Fun}{\mathrm{Fun}}

\newcommand{\Grp}{\mathrm{Grp}}
\newcommand{\Rig}{\mathrm{Rig}}

\newcommand{\preadd}{\mathrm{preadd}}
\newcommand{\rig}{\mathrm{rig}}

\newcommand{\cocart}{\mathrm{cocart}}
\newcommand{\Fin}{\mathsf{Fin}}

\newcommand{\lax}{\mathrm{lax}}
\newcommand{\SegSpc}{\mathsf{SegSpc}}

\newcommand{\Perf}{\mathrm{Perf}}

\newcommand{\Tw}{\mathrm{Tw}}

\newcommand{\Span}{\mathrm{Span}}
\newcommand{\KR}{\mathrm{KR}}

\newcommand{\Cmon}{\mathrm{Cmon}}

\newcommand{\Calg}{\mathrm{Calg}}

\newcommand{\triv}{\mathrm{triv}}
\newcommand{\Y}{\mathrm{Y}}

\newcommand{\grp}{\mathrm{grp}}

\title{Infinity categories with duality and hermitian multiplicative infinite loop space machines}
\author{Hadrian Heine, Alejo Lopez-Avila, Markus Spitzweck}

\date{ }
\begin{document}

\maketitle

\begin{abstract}

We show that any preadditive $\infty$-category with duality gives rise to a direct sum hermitian $K$-theory spectrum. This assignment is lax symmetric monoidal, thereby producing $\bE_\infty$-ring spectra from preadditive symmetric monoidal $\infty$-categories with duality.
To have examples of preadditive symmetric monoidal $\infty$-categories with duality we show that any preadditive symmetric monoidal $\infty$-category, in which every object admits a dual, carries a canonical duality.
Moreover we classify and twist the dualities in various ways and apply our definitions for example to finitely generated projective modules
over $\bE_\infty$-ring spectra.
\end{abstract}

\tableofcontents

\section{Introduction}

$K$-theory has by now a longstanding history.
Vector bundles on topological spaces or schemes give rise to the $K_0$-functor
and also higher $K$-theory \cite{quillen.higher}.
Algebraic $K$-theory can be defined
for any stable $\infty$-category \cite{bgt} or Waldhausen ($\infty$-) category \cite{waldhausen}, \cite{barwick.wald}.
Direct sum $K$-theory of a symmetric monoidal $\infty$-category $\caC$ can be defined to be the spectrum $K(\caC)$ associated to the group
completion of the $\bE_\infty$-space $\caC^\simeq$, the maximal subspace of $\caC$ \cite{gepner-groth-nikolaus}. It was shown in
loc. cit. that the functor $K$ is in fact lax symmetric monoidal, using
a symmetric monoidal structure on the $\infty$-category of small symmetric monoidal $\infty$-categories resembling the one on $\bE_\infty$-spaces, which was defined in \cite{lydakis} for $\Gamma$-spaces and was studied in \cite{schwede.gamma}.

Instead of vector bundles one can consider vector bundles equipped with non-degenerate
symmetric or anti-symmetric bilinear forms giving rise to the Grothendieck-Witt group and
hermitian $K$-theory. Motivated by direct sum $K$-theory we consider in this article
a version of direct sum hermitian $K$-theory. Given a symmetric monoidal category $\caC$ with duality,
e.g. finitely generated projective $R$-modules $\caP(R)$ over a commutative ring $R$
with the direct sum as symmetric monoidal structure and the usual duality,
one can consider the maximal $\bE_\infty$-space in the symmetric monoidal category $\caH^{\mathrm{nd}}(\caC)$ of non-degenerate hermitian objects of $\caC$ and the spectrum associated to its group completion. This version
of hermitian $K$-theory coincides with the common definitions
in the case of $\caP(R)$ (see \cite{schlichting.giffen} and \cite[Theorem A.1]{schlichting.derived} if $2$ is invertible in $R$ and
\cite[Theorem A]{hesselholt-madsen} in general, compare also to \cite[1.6. Theoreme]{karoubi.th}).

Direct sum $K$-theory is also meaningful in the case of modules over $\bE_\infty$-ring spectra,
e.g. the direct sum $K$-theory of finitely generated projective $R$-modules for $R$ a connective $\bE_\infty$-ring spectrum
is the connective algebraic $K$-theory of $R$ (see e.g. \cite[Theorem 5]{lurie.notes281-19}).
We have a similar behavior in the case of hermitian $K$-theory (see Remark \ref{rey}).
In this article we exhibit perfect or finitely generated projective modules over an $\bE_\infty$-ring spectrum as an $\infty$-category
with duality together with the relevant symmetric monoidal structures, to which direct sum hermitian $K$-theory can be applied, thereby producing an $\bE_\infty$-ring spectrum.

Precisely, we define $\infty$-categories with duality as homotopy fixed points of the action of the group with two elements $C_2$ on the $\infty$-category $\Cat_\infty$ of small $\infty$-categories, where the non-trivial element
of $C_2$ sends an $\infty$-category $\caC$ to its opposite $\infty$-category $\caC^\op$ (Definition \ref{dua}). Similarly, we define symmetric monoidal $\infty$-categories with duality as homotopy $C_2$-fixed points of the induced $C_2$-action on the $\infty$-category of small symmetric monoidal $\infty$-categories, which are equivalently given by commutative monoid objects in the $\infty$-category of small $\infty$-categories with duality.
For these symmetric monoidal $\infty$-categories with duality we construct a hermitian $K$-theory that carries a multiplicative structure: we consider a closed symmetric monoidal structure on symmetric monoidal $\infty$-categories with duality and promote hermitian $K$-theory to a lax symmetric monoidal functor
from symmetric monoidal $\infty$-categories with duality to spectra
(Proposition \ref{h4rgrf}).
Moreover we refine hermitian $K$-theory to direct sum real $K$-theory $\KR$ that assigns to any symmetric monoidal $\infty$-category with duality a genuine $C_2$-spectrum whose underlying spectrum and fixed point spectrum are the following:
\begin{theorem}(Theorem \ref{zum})
Group completion of non-degenerate hermitian and underlying objects
defines a functor
$$\KR: (\Cmon(\Cat_\infty), (-)^\op)^{hC_2} \to \Sp^{C_2}_{\geq0}$$
with
$$\Omega^\infty(\KR(\caC,D)^{C_2}) \simeq ((\caC^\simeq,D)^{hC_2})^\grp \hspace{2mm} \mathrm{and}\hspace{2mm} \Omega^\infty(\KR(\caC,D)^\u) \simeq (\caC^\simeq)^\grp, $$
where $(-)^\grp$ denotes group completion. The functor $\KR$ refines to a lax symmetric monoidal functor for the natural tensor
products on both sides.

	
\end{theorem}
As a consequence real $K$-theory produces a genuine $\bE_\infty$-$C_2$-ring spectrum from any commutative algebra with respect to the closed symmetric monoidal structure on symmetric monoidal $\infty$-categories with duality, which we call bimonoidal $\infty$-categories with duality. To give examples of bimonoidal $\infty$-categories with duality we show that any preadditive symmetric monoidal $\infty$-category uniquely extends to a bimonoidal $\infty$-categories with duality whose second tensor product is the sum, over which the first tensor product distributes (Theorem \ref{thhH}). So real $K$-theory gives a genuine $\bE_\infty$-$C_2$-ring spectrum for any preadditive symmetric monoidal $\infty$-category with duality. 

In section \ref{uhus} we 
classify symmetric monoidal dualities: we equip every symmetric monoidal $\infty$-category, in which every object has a dual, with a canonical symmetric monoidal duality: 
\begin{theorem}\label{hgg}(Theorem \ref{ujkp})
Let $\caC$ be a symmetric monoidal $\infty$-category, in which every object $X$ admits a dual $X^\dual.$
There is a functorial symmetric monoidal duality $\caC^\op \to \caC, X \mapsto X^\dual$ on $\caC$.
Moreover for every tensor-invertible object $L$ of $\caC$ there is a duality $\caC^\op \to \caC, X \mapsto X^\dual\otimes L$.
	
\end{theorem}

To prove Theorem \ref{hgg} we identify dualities on an $\infty$-category $\caC$ with $C_2$-equivariant functors $\alpha: \widetilde{\caC^\op \times \caC^\op} \to \caS$ (Theorem \ref{main}), where the left hand $C_2$-action switches the factors and the right hand $C_2$-action is trivial, such that for every $\X \in \caC$ the functor $\alpha(\X,-):\caC^\op \to \caS$ is representable by some object $D(X)$ and the resulting functor $D: \caC^\op \to \caC$ is an equivalence. More generally, we introduce pro-dualites, correspondences of dualities, and prove an equivalence between pro-dualities on an $\infty$-category $\caC$ and $C_2$-equivariant functors $ \widetilde{\caC^\op \times \caC^\op} \to \caS$ (Theorem \ref{main}). An adaption of this construction appears in \cite[§ 3.1.]{calmès2024motivic} for Poincar\'e $\infty$-categories.

In section \ref{hjtgr54rz} we classify dualities on a symmetric monoidal $\infty$-category with duality $\caC$ that are $\caC$-linear and preserve the structure duality.
We define these dualities as homotopy $C_2$-fixed points for a canonical $C_2$-action on the delooping $B\caC$ of $\caC$, the $(\infty,2)$-category with connected space of objects and endomorphisms of the essentially unique object given by $\caC,$ and classify these dualities via the picard spectrum $\pic(\caC)$ of $\caC$ (Lemma \ref{ppp}) that inherits a $C_2$-actions from $\caC$ that we twist via multiplication with $-1$ indicated by $(-)_\dual:$
\begin{theorem}\label{ujj} (Theorem \ref{htewett})
Let $(\caC,D)$ be a symmetric monoidal $\infty$-category with duality.
There is an exact sequence of spectra:
$$\pic(\caC) \to (\pic(\caC),D \circ (-)_\dual)^{h C_2} \to (\pic(\caC)[1],D)^{hC_2} \to \pic(\caC)[1]. $$
\end{theorem}

We use the latter theorem to dedude the following corollary:
\begin{corollary}\label{ujjj} (Corollary \ref{hoy})
\label{hoy}Let $\caC$ be a rigid symmetric monoidal $\infty$-category.
There is an exact sequence of spectra:
$$\pic(\caC) \to \pic(\caC)^{B C_2} \to (\pic(\caC)[1],(-)^\dual)^{hC_2} \to \pic(\caC)[1].$$

\end{corollary}

We apply Corollary \ref{ujjj} to compute some homotopy groups of 
$\pic(\integers)[1]^{hC_2}$ and $\pic(S[\frac{1}{2}])[1]^{hC_2}$
(Proposition \ref{comp}).

	

In section \ref{j5r5z5z} 
we classify dualities on compact spectra. Using an argument from equivariant homotopy theory communicated to us by Niko Naumann we prove that there are uncountably many inequivalent dualities on the $\infty$-category of compact spectra (Corollary \ref{inco}). For that we prove the following proposition:
\begin{proposition}(Proposition \ref{hteetr5})
The canonical map of spectra $S \to \pic(S)$ that sends the
unit to $S[1]$ induces an injection:
\begin{equation}\label{oo}
\pi_0(S^{BC_2}) \to \pi_0(\pic(S)^{BC_2}).
\end{equation}

\end{proposition}
By Theorem \ref{ujj} the right hand side of injection (\ref{oo}) is the set of equivalence classes of dualities on the $\infty$-category of compact spectra. By Lin's Theorem \cite[Theorem 1.1]{lin} there is an isomorphism $ \pi_0(S^{BC_2}) \cong \integers \oplus \integers_2$, where $\integers_2$ is the uncountable ring of 2-adic numbers.
This implies that there are uncountably many inequivalent dualities on the $\infty$-category of compact spectra.

In the paper \cite{spitzweck.gw} hermitian and real $K$-theory for stable $\infty$-categories
with duality is developed using an $\infty$-categorical version of the hermitian $S_\bullet$-construc\-tion
thereby obtaining a suitable theory which goes beyond direct sum hermitian $K$-theory,
and which was further developed in \cite{real.wald}. 
Recently, \cite{calmès2021hermitian1}, \cite{calmès2021hermitian2}, \cite{calmès2021hermitian3} construct real $K$-theory of stable $\infty$-categories equipped with a quadratic functor so called Poincar\'e $\infty$-categories, which produces a genuine $C_2$-spectrum whose underlying spectrum is $K$-theory, whose genuine $C_2$-fixed points are hermitian $K$-theory and whose geometric fixed points are $L$-theory \cite[Corollary 4.4.14.]{calmès2021hermitian2}, and build a far reaching framework for algebraic surgery theory \cite[§ 1]{calmès2021hermitian3}. 

We give partial comparison statements Remarks \ref{rey} and Remark \ref{comp1}. 
In \cite{heine2024equivalencerealshermitian} we prove a comparison between the real K-theories of \cite{calmès2021hermitian1}, \cite{calmès2021hermitian2}, \cite{calmès2021hermitian3} and \cite{real.wald}.

Expanded versions of parts of this paper are a part of the second named author's phd thesis.

\vskip.7cm

{\bf Acknowledgements:}
We thank Niko Naumann for providing a proof of Lemma \ref{hteetr5}.
We would like to thank Hongyi Chu,
David Gepner, Jens Hornbostel, Kristian Moi, Thomas Nikolaus, Oliver R\"ondigs, Sean Tilson and Girja Tripathi for very helpful
discussions and suggestions on the subject.
Moreover we thank the referee for many helpful comments and suggestions.

\subsection*{Notation and terminology}

We fix a hierarchy of Grothendieck universes whose objects we call small, large, etc.
We call a space small, large, etc. if its set of path components and its homotopy groups are so for any choice of base point. We call an $\infty$-category small, large, etc if its maximal subspace and all its mapping spaces are small, large, respectively.

\subsubsection*{General notation}
We write 
\begin{itemize}
\item $\Delta$ for the category of finite, non-empty, totally ordered sets and order preserving maps, whose objects we denote by $[\mathrm{n}] = \{0 < ... < \mathrm{n}\}$ for $\mathrm{n} \geq 0$.
\item $\Fin$ for the category of finite sets and 
 $\Fin_*$ for the category of finite pointed sets. 
\item $\caS$ for the $\infty$-category of small spaces.
\item $\Sp$ for the $\infty$-category of small spectra.

\item $ \Cat_\infty$ for the $\infty$-category of small $\infty$-categories.

\item $C_2:=\mathbb{Z}/2\mathbb{Z}.$

\end{itemize}

We often indicate $\infty$-categories of large objects by $\widehat{(-)}$, for example we write $\widehat{\caS}, \widehat{\Cat}_\infty$ for the $\infty$-categories of large spaces, large $\infty$-categories, respectively.


\vspace{1mm}

For any $\infty$-category $\caC$ containing objects $\A, \B$ we write
\begin{itemize}
\item $\caC(\A,\B)$ for the space of maps $\A \to \B$ in $\caC$,
\item $\caC_{/\A}$ 
for the $\infty$-category of objects over $\A$,
\item $\Ho(\caC)$ for the homotopy category of $\caC$,
\item $\caC^\simeq $ for the maximal subspace in $\caC$.
\item 
$\Fun(\caC,\caD)$ for the $\infty$-category of functors $\caC \to \caD$ to any $\infty$-category $\caD.$
\end{itemize}



We call a functor $\phi:\caC \to \caD$ an embedding if it is fully faithful.
We call $\phi$ an inclusion if it induces an embedding on maximal subspaces and on mapping spaces. If $\phi:\caC \to \caD$ is an inclusion, we often write $\caC \subset \caD$
if $\phi$ is clear from the context.
For every $\infty$-category $\caC$ and subcategory $\iota: \caB' \subset \Ho(\caC)$ 
there is a unique inclusion $\caB \subset \caC$ inducing $\iota$ on homotopy categories.

\subsubsection*{Symmetric monoidal $\infty$-categories}
We heavily use symmetric monoidal $\infty$-categories \cite[Definition 2.0.0.7.]{lurie.HA}.
We identify symmetric monoidal $\infty$-categories with commutative monoid objects in $\Cat_\infty$ via \cite[Example 2.4.2.4.]{lurie.HA}.
We denote symmetric monoidal $\infty$-categories in the form $(\caC,\otimes)$ and call $\caC$ the underlying $\infty$-category. We typically drop $\otimes$ from the notation unless there are several symmetric monoidal structures on $\caC$, which we need to distinguish.

\begin{itemize}
\item For every symmetric monoidal $\infty$-category $\caC$ we write
$\Calg(\caC)$ for the $\infty$-category of commutative algebras in $\caC$.

\item For every $\infty$-category $\caC$ that admits finite products
we write $\Cmon(\caC)$ for the $\infty$-category of commutative monoid objects in $\caC$
and $\Grp_{\bE_\infty}(\caC) \subset \Cmon(\caC)$ for the full subcategory of $\bE_\infty$-group objects.

\item For every symmetric monoidal $\infty$-category $\caC$ and commutative algebra $A$ in $\caC$ we write
$\Mod_A(\caC)$ for the $\infty$-category of $A$-modules in $\caC$.

\end{itemize}
There is a forgetful functor $\Calg(\caC)\to \caC$ that preserves small limits \cite[Corollary 3.2.2.5.]{lurie.HA}.

A symmetric monoidal $\infty$-category is cartesian if the tensor unit is final and $\otimes$ is the product.
By \cite[2.4.1.5., 2.4.1.7.]{lurie.HA} every $\infty$-category $\caC$ that admits finite products carries a unique cartesian symmetric monoidal structure denoted by $(\caC, \times)$ and there is a canonical equivalence
$\Calg(\caC, \times) \simeq \Cmon(\caC).$
For every symmetric monoidal $\infty$-category $\caC$ and commutative algebra $A$ in $\caC$ the $\infty$-category $\Mod_A(\caC)$ inherits a symmetric monoidal structure from $\caC$, the relative tensor product $\otimes_A$, such that the free $A$-module functor is symmetric monoidal (see \cite[Theorem 4.5.2.1.]{lurie.HA} or \cite[ Corollary 7.4.]{heine2023monadicity}).

\begin{itemize}
	
\subsubsection*{Group actions}
\item For any group $\G$ and $\infty$-category $\caC$ let $$\caC[\G]:= \Fun(B\G, \caC)$$ be the $\infty$-category of objects of $\caC$ with $\G$-action, where $B\G $ is the category with one object and automorphisms of the unique object given by $\G.$


Pulling back along the map $B\G \to *$ induces a functor $\triv: \caC \to \caC[\G]$ that endows an object of $\caC$ with trivial $\G$-action.

\item For $\X \in \caC[\G]$ let $\X^{h\G}:= \lim \X \in \caC$ the homotopy fixed points of the $\G$-action on $\X$ if this limit exists. 
If all $B\G$-shaped limits exist in $\caC$, the
functor $\tr: \caC \to \caC[\G]$ admits a right adjoint $(-)^{h\G}$ that sends $\X$ to $\X^{h\G}.$

\item For any $\infty$-category $\caC$ containing an object $\X \in \caC$ and any space $\K$ let $\X^\K \in \caC$ the cotensor if it exists.
For $\X \in \caC[\G]$ with trivial $\G$-action there is a canonical equivalence $\X^{h\G} \simeq \X^{B\G}$.

\item We refer to objects of $\caS[C_2]$ as spaces with $C_2$-action and to objects of $\Cat_\infty[C_2]$ as $\infty$-categories with $C_2$-action.
\end{itemize}

For any $C_2$-equivariant functor $ \caC \to \caD$ between $\infty$-categories with $C_2$-action whose underlying functor (after forgetting the $C_2$-actions) admits a right adjoint, the right adjoint refines to a $C_2$-equivariant functor
such that the unit and counit are $C_2$-equivariant natural transformations
\cite[Corollary 5.15.]{heine2023monadicity}.

\subsubsection*{Straightening}\label{noil}
We often use the $\infty$-categorical Grothendieck construction ((un)straightening)
\cite[Theorem 3.2.0.1.]{lurie.HTT}, which classifies functors $\alpha: \rS \to \Cat_\infty$ for any small $\infty$-category $\rS$ by cocartesian fibrations $\X \to \rS$ resulting in an equivalence $$ \Fun(\rS,\Cat_\infty)\simeq \Cat^\cocart_{\infty/\rS},$$
where the right hand side is the subcategory of cocartesian fibrations to $\rS$ and functors over $\rS$ preserving cocartesian lifts.
A functor $\rS \to \Cat_\infty$ takes values in spaces if and only if the corresponding cocartesian fibration is a left fibration \cite[Proposition 2.4.2.4.]{lurie.HTT}.
If $\rS$ is a space, the cocartesian fibration $\X \to \rS$ is the functor $\colim(\alpha)\to \colim(*) \simeq \rS$ induced by the unique map $\alpha \to *$ in $\Fun(\rS,\Cat_\infty)$
by \cite[Corollary 3.3.4.3.]{lurie.HTT}.
Straightening over $BC_2$ gives an equivalence $\Cat_\infty[C_2]\simeq \Cat_{\infty/BC_2}$
that sends an $\infty$-category with $C_2$-action $\caD$ to $\caD_{hC_2}$. 

Let $\rS$ be an $\infty$-category with $C_2$-action.
A $C_2$-equivariant functor $\beta: \rS \to \Cat_\infty$, where $\Cat_\infty$ carries the trivial action, is classified by a $C_2$-equivariant cocartesian fibration $\X \to \rS$,
refining the non-equivariant (un)straightening construction:
the $C_2$-equivariant functor $\beta$ corresponds to a functor $\rS_{hC_2} \to \Cat_\infty$
classified by a cocartesian fibration $\Y \to \rS_{hC_2}$ (over $BC_2$),
which by straigtening over $BC_2$ is precisely classified by a $C_2$-equivariant functor $\X \to \rS$
whose underlying functor (after forgetting the $C_2$-action) is a cocartesian fibration \cite[Proposition 2.4.2.11.]{lurie.HTT}, which classifies the functor $\beta: \rS \to \rS_{hC_2} \to \Cat_\infty$.


\section{Dualities} 
In this section we define $\infty$-categories with duality
and symmetric monoidal and bimonoidal $\infty$-categories with duality,
to which we apply hermitian $K$-theory in the following sections.

\subsection{$\infty$-categories with duality} 

\begin{notation}\label{hjo}
The category $\Delta$ carries a $C_2$-action
that sends $[n]$ to $[n]$ and a map $\alpha: [n] \to [m]$ to $i \mapsto m-\alpha(n-i)$,
which we denote by $(-)^\op$.

\end{notation}

\begin{construction}
The $C_2$-action $(-)^\op$ on $\Delta$ of Notation \ref{hjo} induces a $C_2$-action on presheaves $\caP(\Delta).$
There is a canonical localization $ \caP(\Delta) \rightleftarrows \Cat_{\infty} $
whose local objects are the complete Segal spaces \cite[Theorem 14.6.]{Barwick2021OnTU}.
The $C_2$-action on $\caP(\Delta)$ preserves complete Segal spaces and so restricts to a $C_2$-action on $\Cat_{\infty},$ which sends an $\infty$-category $\caC$ to its opposite $\infty$-category $\caC^\op.$
We refer to this $C_2$-action on $\Cat_\infty$ as the non-trivial $C_2$-action to distinguish it from the trivial $C_2$-action, and we write $(\Cat_\infty,(-)^\op)$ for it.
\end{construction}

\begin{definition}\label{dua}
	
An $\infty$-category with duality is a homotopy $C_2$-fixed point of $(\Cat_\infty,(-)^\op)$. 

\end{definition}
The $\infty$-category of small $\infty$-categories with duality is
$(\Cat_\infty,(-)^\op)^{hC_2}.$

\vspace{1mm}

\begin{notation}
	
We often write an $\infty$-category with duality as $(\caC, D)$, where $\caC$ is an $\infty$-category
and $D$ is a duality on $\caC$, i.e. an object in the fiber $\{\caC\}\times_{\Cat_\infty} (\Cat_\infty,(-)^\op)^{hC_2}$ of the forgetful functor $(\Cat_\infty,(-)^\op)^{hC_2} \to \Cat_\infty.$
	
\end{notation}

\begin{remark}
	
The $C_2$-action on $\Cat_\infty$ restricts to the full subcategory $\caS$ of spaces.
The restricted $C_2$-action is trivial and is the only $C_2$-action on $\caS$
since the space of automorphisms of $\caS$ is contractible. 
The $C_2$-equivariant embedding $(\caS,\triv) \subset (\Cat_\infty,(-)^\op)$ induces an 
embedding $$\caS[C_2] \subset (\Cat_\infty,(-)^\op)^{hC_2}$$ 
identifying a duality on a space with a $C_2$-action on a space.

\end{remark}

\begin{notation}
	
Let $\Cat_\infty^{\prod} \subset \Cat_\infty$ be the subcategory of small $\infty$-categories having finite products and functors preserving finite products.
	
\end{notation}

The functor $ \Cmon: \widehat{\Cat}_\infty^{\prod} \to \widehat{\Cat}_\infty, \caC \mapsto \Cmon(\caC)$ sends the non-trivial $C_2$-action on $\Cat_\infty$
to a $C_2$-action on $ \Cmon(\Cat_\infty)$.

\begin{definition}\label{dua2}
A symmetric monoidal $\infty$-categories with duality is homotopy $C_2$-fixed point of the induced $C_2$-action on $ \Cmon(\Cat_\infty),$
which we also denote by $(\Cmon(\Cat_\infty),(-)^\op).$
	
\end{definition}
The $\infty$-category of small symmetric monoidal $\infty$-categories with duality is
$ (\Cmon(\Cat_\infty),(-)^\op)^{hC_2}.$ 

\begin{notation}
	
We sometimes write a symmetric monoidal $\infty$-category with duality as $(\caC,\otimes, D)$, where $(\caC, \otimes)$ is a symmetric monoidal $\infty$-category
and $D$ is a symmetric monoidal duality on $\caC$, i.e. an object in the fiber $\{\caC\}\times_{\Cmon(\Cat_\infty)} (\Cmon(\Cat_{\infty}),(-)^\op)^{hC_2}$ of the forgetful functor $(\Cmon(\Cat_{\infty}),(-)^\op)^{hC_2} \to \Cmon(\Cat_\infty).$
	
\end{notation}

\begin{remark}
The functor $ \Cmon: \widehat{\Cat}_\infty^{\prod} \to \widehat{\Cat}_\infty, \caC \mapsto \Cmon(\caC)$ preserves limits as a consequence of \cite[Corollary 2.5.]{gepner-groth-nikolaus} so that there is a canonical equivalence $ (\Cmon(\Cat_\infty),(-)^\op)^{hC_2} \simeq \Cmon((\Cat_\infty,(-)^\op)^{hC_2}).$
\end{remark}

\begin{lemma}
	
Let $(\caC,\tau)$ be an $\infty$-category with $C_2$-action and $X \in \caC$
such that $\caC$ admits 
the product $X \times \tau(X)$.
There is an object $\widetilde{X \times \tau(X)} \in (\caC,\tau)^{hC_2}$ lying over 
$X \times \tau(X)$ such that for every $\Y \in (\caC,\tau)^{hC_2}$ the induced map $$(\caC,\tau)^{hC_2}(Y, \widetilde{X \times \tau(X)}) \to \caC(Y, X \times \tau(X))
\to \caC(Y,X)$$ induced by projection to the first factor is an equivalence.


	
\end{lemma}

\begin{proof}The $C_2$-action on $\caC$ is classified via straightening by a functor $\phi: \caC_{hC_2} \to BC_2$ by \cite[Corollary 3.3.4.3.]{lurie.HTT}. By \cite[Corollary 3.3.3.2.]{lurie.HTT} there is a canonical equivalence $ \caC^{hC_2} \simeq \Fun_{BC_2}(BC_2,\caC_{hC_2})$ 
and the forgetful functor $\caC^{hC_2}\to \caC$ identifies with the functor
$\kappa: \Fun_{BC_2}(BC_2,\caC_{hC_2}) \to \caC$ evaluating at the unique object of $BC_2$.
Hence the object $\widetilde{X \times \tau(X)} \in (\caC,\tau)^{hC_2}$ with the claimed properties is the right Kan extension relative to $BC_2$ of $X:*\to \caC$ along
the functor $* \to BC_2$. By \cite[Lemma 4.3.2.13.]{lurie.HTT} this right Kan extension exists
and sends the unique object of $BC_2$ to $X \times \tau(X)$ if 
the constant functor $* \times_{BC_2} * \simeq C_2 \to \caC_{hC_2}$ with value $X$
admits a $\phi$-limit. 
By \cite[Proposition 4.3.1.9.]{lurie.HTT} this $\phi$-limit 
is the limit of the functor $C_2 \to \caC$ corresponding to the pair $(X,\tau(X))$, which is $X \times \tau(X).$
	
\end{proof}
\begin{corollary}
The forgetful functor $(\Cat_\infty,(-)^\op)^{hC_2}\to \Cat_\infty$ admits a right adjoint that sends $\caC$ to $\caC \times \caC^\op$.
	
\end{corollary}

\begin{lemma}
Let $(\caC,\tau)$ be an $\infty$-category with $C_2$-action and $X\in (\caC,\tau)^{hC_2}$ such that $\caC$ admits the product $X \times \tau(X)$.
There is a canonical $C_2$-equivariant equivalence $$\widetilde{X \times \tau(X)} \simeq \widetilde{X\times X}.$$

\end{lemma}

\begin{notation}
Let $\caC$ be an $\infty$-category and $X\in \caC$ such that $\caC$ admits the product $X \times X$.
We write $\widetilde{X \times X} $ for $\widetilde{X\times \triv(X)}$.
	
\end{notation}

\begin{proof}
Projection to the first factor $X \times \tau(X) \to X$ is adjoint to a map $\rho:\widetilde{X \times \tau(X)} \to \widetilde{X\times X}$ in $\caC^{hC_2}$ lying over the equivalence $\id\times \kappa: X\times X \simeq X \times \tau(X)$, where $\kappa: X \simeq \tau(X)$ is part of the fixed point datum on $X$. So $\rho$ is an equivalence.
\end{proof}

\subsection{Bimonoidal $\infty$-cate\-gories with duality}

To define bimonoidal $\infty$-categories with duality
we define a presentably symmetric monoidal structure on $\Cmon(\Cat_\infty)$
compatible with the $C_2$-action.




\begin{definition}
An $\infty$-category $\caC$ is preadditive if it has a zero object and for any objects $\X, \mathrm{Y} \in \caC$ the canonical morphism $\X \coprod \mathrm{Y} \to \X\times \mathrm{Y}$ is an equivalence.
\end{definition}

\begin{notation}
Given two objects $\X,\Y$ of a preadditive $\infty$-category we write $\X \oplus \Y$ for
$\X \coprod \Y \simeq \X \times \Y.$
\end{notation}

\begin{definition}
	
A symmetric monoidal $\infty$-category is preadditive if the underlying $\infty$-category is preadditive and the tensor product preserves finite products component-wise.
\end{definition}
\begin{notation}
Let $\Cat_\infty^\preadd \subset \Cat_\infty^{\prod}$ be the full subcategory
of preadditive $\infty$-categories and $\Cmon(\Cat_\infty)_\preadd \subset \Cmon(\Cat_\infty)$ the subcategory of preadditive symmetric monoidal $\infty$-categories and symmetric monoidal functors preserving finite products.
	
\end{notation}

The next proposition is \cite[Proposition 2.3., Corollary 2.4., Corollary 2.5.,  Corollary 4.9.]{gepner-groth-nikolaus}: 


\begin{proposition}\label{leomor}
Let $\caC$ be a preadditive $\infty$-category and $\caD$ an $\infty$-category that has finite products.
\begin{enumerate}
\item The $\infty$-category $\Cmon(\caD)$ is preadditive. 

\item The forgetful functor $\Cmon(\caD) \to \caD$ is an equivalence if and only if $\caD$ is preadditive.

\item The $\infty$-categories $\Fun^{\prod}(\caC,\caD), \Fun^{\prod}(\caD,\caC)$ are preadditive.

\item 
The forgetful functor $\Cmon(\caD) \to \caD$ induces an equivalence $$\Fun^{\prod}(\caC,\Cmon(\caD)) \to \Fun^{\prod}(\caC,\caD),$$
where ${\prod}$ indicates functors preserving finite products.

\item If $\caC, \caD$ are also presentable, 
the free functor $\caD \to \Cmon(\caD)$ induces an equivalence 
$$\Fun^{\L}(\Cmon(\caD), \caC) \to \Fun^{\L}(\caD,\caC),$$
where $\L, \R$ indicate left, right adjoint functors, respectively.
\end{enumerate}

\end{proposition}





\begin{notation}
Let $\Pr^L$ be the $\infty$-category of presentable $\infty$-categories and left adjoint functors.\end{notation}
By \cite[Corollary 4.8.1.4, Proposition 4.8.1.17.]{lurie.HA} there is a closed symmetric monoidal structure on $\Pr^L$,
where the internal hom of $\caC,\caD \in \Pr^L$ is the $\infty$-category $\Fun^L(\caC,\caD)$ of left adjoint functors $\caC \to \caD$.

By Proposition \ref{leomor} 
(1), (2), (4) there is a localization on $\Pr^L$ sending $\caC$ to $\Cmon(\caC)$
whose local objects are the preadditive $\infty$-categories
and whose unit $\caC \to \Cmon(\caC)$ for $\caC \in \Pr^L$ is the free functor.
By \cite[Theorem 4.6.]{gepner-groth-nikolaus} this localization is symmetric monoidal.
This implies that the localization functor $\Cmon:\Pr^L \to \Pr^L$ is lax symmetric monoidal and so sends any presentably symmetric monoidal $\infty$-category $\caC$ (with $C_2$-action) to a presentably symmetric monoidal structure on $\Cmon(\caC)$ (with $C_2$-action),
which we denote by $$(\Cmon(\caC), \otimes).$$
In particular, for every presentably symmetric monoidal $\infty$-category $\caC$ with $C_2$-action the $\infty$-category $\Cmon(\caC)^{hC_2}$
carries a presentably symmetric monoidal structure.

\begin{notation}
	
For every presentably symmetric monoidal $\infty$-category $\caC$ let $$\Rig(\caC):= \Calg(\Cmon(\caC),\otimes).$$
\end{notation}

\begin{remark}
	
An object $\X \in \Rig(\caC)$ roughly consists of an object $\X \in \caC$
equipped  with two coherently unital, associative and commutative operations $+,\cdot: \X \times \X \to \X$ such that $\cdot$ coherently distributes over $+.$	
\end{remark}

\begin{remark}
By functoriality for every presentably symmetric monoidal $\infty$-category with $C_2$-action $(\caC,\tau)$ there is an $\infty$-category with $C_2$-action $(\Rig(\caC),\tau)$
and canonical equivalences $$(\Rig(\caC),\tau)^{hC_2} \simeq \Calg((\Cmon(\caC),\tau)^{hC_2},\otimes) \simeq \Rig((\caC,\tau)^{hC_2}).$$
		
\end{remark}


\begin{definition}
	
\begin{itemize}
\item A bimonoidal $\infty$-category is an object of $\Rig(\Cat_\infty) $.
\item A bimonoidal $\infty$-category with duality is an object of $ (\Rig(\Cat_\infty),(-)^\op)^{hC_2}.$ 
\end{itemize}
\end{definition}
We often denote a bimonoidal $\infty$-category by a triple $(\caC, \boxtimes,\otimes),$
where $\otimes $ distributes over $\boxtimes.$


\begin{definition}
A bimonoidal $\infty$-category $(\caC, \boxtimes,\otimes)$ (with duality) is preadditive if $\caC$ is preadditive and $(\caC, \boxtimes)$ is cartesian.
\end{definition}

\begin{lemma}\label{leem}
The $\infty$-category $\Cat_\infty^{\prod}$
is preadditive.
\end{lemma}

\begin{proof}
	
The $\infty$-category $\Cat_\infty^{\prod}$ admits finite products (in fact all small limits) that are preserved by the inclusion $\Cat_\infty^{\prod} \subset \Cat_\infty$. Since every finite products preserving functor preserves the final object, the $\infty$-category $\Cat_\infty^{\prod}$ admits a zero object.
For every $\caC \in \Cat_\infty^{\prod}$ let $\mu_\caC:\caC \times \caC \to \caC$ the functor assigning the two-fold product.
By \cite[Proposition 2.4.3.19.]{lurie.HA} the statement follows from the fact that the functor 
$\caC \times 0 \to \caC \times \caC \xrightarrow{\mu_\caC}\caC $ is the identity, the functor
$(\caC \times \caC) \times (\caD \times \caD) \simeq (\caC \times \caD)\times(\caC \times \caD) \xrightarrow{ \mu_{\caC \times \caD}} \caC \times \caD$ is the functor $\mu_\caC \times \mu_\caD$ and for every map
$\f: \caC \to \caD $ in $\Cat_\infty^{\prod}$ we have $\f \circ \mu_\caC = \mu_\caD \times (\f \times \f).$

	
\end{proof}

\begin{corollary}\label{proo2}
The full subcategory
$\Cat_\infty^\preadd \subset \Cat_\infty^{\prod}$ is preadditive.	
	
\end{corollary}

\begin{proof}
This follows since $\Cat_\infty^\preadd \subset \Cat_\infty^{\prod}$ is closed under finite products.	
	
\end{proof}

By the latter corollary the forgetful functors $$  \Cmon(\Cat_\infty^{\prod}) \to \Cat_\infty^{\prod},  \Cmon(\Cat_\infty^\preadd) \to \Cat_\infty^\preadd $$ are equivalences.

\begin{remark}
The inclusion $\Cat_\infty^{\prod} \subset \Cat_\infty$ preserves
finite products and so induces an inclusion 
\begin{equation}\label{klo}
\xi: \Cat_\infty^{\prod}\simeq \Cmon(\Cat_\infty^{\prod}) \subset \Cmon(\Cat_\infty).
\end{equation}

The inclusion $\xi$ is fully faithful and by an Eckmann-Hilton argument the essential image precisely consists of the cartesian symmetric monoidal $\infty$-categories.

\end{remark}
\begin{remark}\label{zulm}
	
By \cite[Corollary 4.8.1.4]{lurie.HA} the $\infty$-category $\Cat_\infty^{\prod}$ carries a presentably symmetric monoidal structure 
such that the inclusion $\Cat_\infty^{\prod} \subset \Cat_\infty$ admits a symmetric monoidal left adjoint. Applying the lax symmetric monoidal localization functor
$\Cmon:\Pr^L \to \Pr^L$ we find that the embedding (\ref{klo})
admits a symmetric monoidal left adjoint and so refines to a lax symmetric monoidal embedding
$$ (\Cat_\infty^{\prod},\otimes) \subset (\Cmon(\Cat_\infty),\otimes).$$
\end{remark}
\begin{remark}\label{hhhp}
The lax symmetric monoidal inclusion $(\Cat_\infty^{\prod}, \otimes) \subset (\Cat_\infty, \times)$
gives rise to an inclusion $\Calg(\Cat_\infty^{\prod},\otimes) \subset \Calg(\Cat_\infty,\times)\simeq \Cmon(\Cat_\infty)$
that exhibits $\Calg(\Cat_\infty^{\prod},\otimes)$ as the subcategory of $\Cmon(\Cat_\infty)$
of symmetric monoidal $\infty$-categories that admit finite products and whose tensor product preserves finite products component-wise and symmetric monoidal functors preserving finite products.	
	
\end{remark}

We have the following proposition:
\begin{proposition}\label{thhay}
	
There is a presentably symmetric monoidal structure on the $\infty$-category $\Cat_\infty^\preadd$
such that the embedding $$\Cat_\infty^\preadd \subset \Cat_\infty^{\prod}$$
admits a symmetric monoidal left adjoint.
	
\end{proposition}

\begin{remark}
Because the embedding (\ref{klo}) admits a symmetric monoidal left adjoint by Remark \ref{zulm}, by Proposition \ref{thhay} the embedding $$\Cat_\infty^\preadd \simeq \Cmon(\Cat_\infty^\preadd) \subset \Cmon(\Cat_\infty) $$ admits a symmetric monoidal left adjoint.
As $\Cat_\infty^\preadd$ is closed in $ \Cmon(\Cat_\infty)$ under the $C_2$-action,
we find that the latter embedding admits a $C_2$-equivariant symmetric monoidal left adjoint.
\end{remark}


\begin{proof}[Proof of Proposition \ref{thhay}]

Let $\Cat_\infty^{\coprod} \subset \Cat_\infty$ be the subcategory of small $\infty$-categories having finite coproducts and functors preserving finite coproducts. Taking the opposite $\infty$-category restricts to an equivalence
$\Cat_\infty^{\prod} \simeq \Cat_\infty^{\coprod}$, which further restricts to an
autoequivalence of $ \Cat_\infty^\preadd.$
Consequently, it is enough to prove that the embedding $ \Cat_\infty^\preadd \subset \Cat_\infty^{\coprod}$ admits a symmetric monoidal left adjoint, which is notationally more convenient.
For any small $\infty$-category $\caC$ that admits finite coproducts let $\caC'$ be the essential image of the composition $\caC \to \Fun^{\prod}(\caC^\op, \caS) \to \Cmon(\Fun^{\prod}(\caC^\op, \caS)) \simeq \Fun^{\prod}(\caC^\op, \Cmon(\caS)) $ of the Yoneda-embedding and the free functor.
Since the  Yoneda-embedding $\caC \to \Fun^{\prod}(\caC^\op, \caS)$ preserves finite coproducts \cite[Proposition 5.5.8.10.]{lurie.HTT}, the full subcategory $\caC'$ is closed in the preadditive $\infty$-category $\Cmon(\Fun^{\prod}(\caC^\op, \caS))$ under finite coproducts and therefore is itself preadditive.
We will prove that the induced functor $\caC \to \caC'$ induces an equivalence $\kappa_\caD: \Fun^{\coprod}(\caC', \caD) \to \Fun^{\coprod}(\caC, \caD).$
The functor $\kappa_\caD$ is the pullback of the functor $\kappa_{\Fun^{\prod}(\caD^\op,\caS)}$
along the Yoneda-embedding $\caD \subset \Fun^{\prod}(\caD^\op,\caS) \simeq \Fun(\caD^\op,\Cmon(\caS))$.
Since $\Fun^{\prod}(\caC^\op, \caS)$ is preadditive and presentable \cite[Proposition 5.5.8.10.]{lurie.HTT},
we can therefore assume that $\caD$ is a presentable preadditive $\infty$-category
when proving that $\kappa_\caD$ is an equivalence.

The $\infty$-category $\Fun^{\prod}(\caC^\op, \caS)$ is generated by $\caC$ under small sifted colimits \cite[Lemma 5.5.8.14.]{lurie.HTT}. Since $\Cmon(\Fun^{\prod}(\caC^\op, \caS)) $ is generated by
$\Fun^{\prod}(\caC^\op, \caS)$ under small sifted colimits, $\Cmon(\Fun^{\prod}(\caC^\op, \caS)) $ is generated by $\caC'$ under small sifted colimits.
Every object of $\caC$ is compact and projective in $\Fun^{\prod}(\caC^\op, \caS)$ by \cite[Proposition 5.5.8.10.4.]{lurie.HTT} so that every object of $\caC'$ is compact and projective in $\Fun^{\prod}(\caC^\op, \Cmon(\caS))$. By \cite[Proposition 5.5.8.22.]{lurie.HTT} this guarantees that the embedding $\caC' \subset \Fun^{\prod}(\caC^\op, \Cmon(\caS))$ induces an equivalence $\Fun^{\prod}(\caC'^\op,\caS) \simeq \Fun^{\prod}(\caC^\op, \Cmon(\caS)). $

For every small $\infty$-category $\caB$ that admits finite coproducts the induced functor $\Fun^\L(\Fun^{\prod}(\caB^\op,\caS),\caD) \to \Fun^{\coprod}(\caB,\caD)$ is an equivalence \cite[Proposition 5.5.8.15.]{lurie.HTT}.
So in the following commutative square the vertical functors are equivalences:
$$\xymatrix{\Fun^{\L}(\Fun^{\prod}(\caC^\op,\Cmon(\caS)), \caD) \ar[r]^\simeq \ar[d]^\simeq &\Fun^{\L}(\Fun^{\prod}(\caC^\op,\caS), \caD)  \ar[d]^\simeq \\
\Fun^{\coprod}(\caC', \caD)\ar[r] & \Fun^{\coprod}(\caC, \caD).}$$

\end{proof}

\begin{remark}
By Proposition \ref{thhay} the $\infty$-category $\Cat_{\infty}^\preadd$ carries a presentably symmetric monoidal structure and there is a lax symmetric monoidal embedding $(\Cat_{\infty}^\preadd, \otimes) \subset (\Cat_\infty^{\prod}, \otimes)$ right adjoint to the symmetric monoidal localization functor.
The latter gives rise to an inclusion $\Calg(\Cat_\infty^{\preadd},\otimes) \subset \Calg(\Cat_\infty^{\prod},\otimes) \subset \Cmon(\Cat_\infty)$
that exhibits $\Calg(\Cat_\infty^{\preadd},\otimes)$ as the full subcategory 
spanned by the preadditive symmetric monoidal $\infty$-categories.
\end{remark}
Proposition \ref{thhay} gives the following corollary:

\begin{corollary}\label{thh}
	
There is a canonical localization $$((\Cmon(\Cat_\infty),(-)^\op)^{hC_2}, \otimes) \to ((\Cat_\infty^\preadd,(-)^\op)^{hC_2}, \otimes).$$
\end{corollary}

\begin{proof}
	
The $C_2$-equivariant inclusion $ \Cat_\infty^\preadd \subset \Cat_\infty$ yields an inclusion $$(\Cat_\infty^\preadd,(-)^\op)^{hC_2} \simeq (\Cmon(\Cat_\infty^\preadd),(-)^\op)^{hC_2} \subset (\Cmon(\Cat_\infty),(-)^\op)^{hC_2}$$ whose image are the preadditive symmetric monoidal $\infty$-categories with duality and additive symmetric monoidal functors preserving the duality.
\end{proof}
Passing to commutative algebras we obtain the following theorem:


\begin{theorem}\label{thhH}
	
There is a canonical localization $$(\Rig(\Cat_\infty),(-)^\op)^{hC_2} \to (\Cmon(\Cat_\infty)_\preadd,(-)^\op)^{hC_2}$$
whose fully faithful right adjoint identifies preadditive symmetric monoidal $\infty$-categories with duality $(\caC,\otimes, D) $ with preadditive bimonoidal $\infty$-categories with duality $(\caC,\otimes, \boxtimes, D) $, where $\boxtimes=\oplus$ is the sum.
	
\end{theorem}

\section{Hermitian objects}

Next we define hermitian objects to construct hermitian $K$-theory. 

\label{hermit}
\begin{notation}
Let $e: \Delta \to \Delta$ be the functor $  [n] \mapsto [n] * [n]^\op \simeq [2n+1] $, which we call edgewise subdivision.
\end{notation}

\begin{remark}
The functor $e$ is a $C_2$-equivariant functor $(\Delta,\triv) \to (\Delta, (-)^\op)$. 
\end{remark}
\begin{notation}
By functoriality of taking presheaves the $C_2$-equivariant functor $(\Delta,\triv) \to (\Delta, (-)^\op)$ induces a $C_2$-equivariant functor
$$\Tw:= e^\ast : (\caP(\Delta),(-)^\op) \to (\caP(\Delta),\triv),$$ 
which gives on homotopy $C_2$-fixed points a functor $ (\caP(\Delta),(-)^\op)^{hC_2} \to \caP(\Delta)[C_2],$ also denoted by $\Tw$. 
\end{notation}


By the next lemma the functor $\Tw: (\caP(\Delta),(-)^\op)^{hC_2} \to \caP(\Delta)[C_2]$ restricts to a functor 
$ (\Cat_\infty,(-)^\op)^{hC_2} \to \Cat_\infty[C_2]$ denoted by the same name, which sends an $\infty$-category with duality $(\caC,D)$ to an $\infty$-category with $C_2$-action $(\Tw(\caC),D).$
This $C_2$-action sends a morphism $f:A \to B$ in $\caC$ to the morphism
$D(f): D(B) \to D(A)$ in $\caC.$

\begin{lemma}
For every (complete) Segal space $\caC$ the simplicial space $\Tw(\caC)$ is a (complete) Segal space.
	
\end{lemma}

\begin{proof}
To verify the Segal condition it is enough to observe that the edgewise subdivision
$ \Delta \to \Delta, [n] \mapsto [n] * [n]^\op $ is a co-Segal object:
for every $[n] \in \Delta$ the canonical maps
$[1] \simeq \{i-1,i\} \subset [n],$ $1 \leq i \leq n$,
induce an isomorphism
$$ [1] * [1]^\op \coprod_{[0]*[0]^\op} ...\coprod_{[0]*[0]^\op} [1] * [1]^\op \cong [3] \coprod_{[1]} ... \coprod_{[1]} [3] \to [n]* [n]^\op \cong [2n+1].$$
Note that this colimit is preserved by the embedding $\Delta \subset \SegSpc$
into the full subcategory of $\caP(\Delta)$ spanned by the Segal spaces
by design of the latter.

Next we prove completeness. The unique map $[1] \to [0]$ induces a map $$\rho: \caC_{[0]*[0]^\op} \simeq \caC_{[1]} \simeq \caC_{[0]} \times_{\caC_{[0]}} \caC_{[1]} \times_{\caC_{[0]}} \caC_{[0]} \to \caC_{[1]*[1]^\op} $$$$\simeq \caC_{[3]} \simeq \caC_{[1]} \times_{\caC_{[0]}} \caC_{[1]} \times_{\caC_{[0]}} \caC_{[1]},$$
which is an embedding if the map ${\caC_{[0]}} \to \caC_{[1]}$ induced by $[1] \to [0]$ 
is an embedding. 
So if $\caC$ is a complete Segal space, the map $\rho$ is an embedding of spaces.
The essential image of $\rho$ precisely consists of the objects of $\caC_{[1]} \times_{\caC_{[0]}} \caC_{[1]} \times_{\caC_{[0]}} \caC_{[1]}$ whose image under projection to the first and third factor is an equivalence with respect to the composition of the Segal space $\caC$, which are precisely the 
equivalences of the Segal space $\Tw(\caC).$
	 
\end{proof}


\begin{definition}
Let $(\caC,D)$ be an $\infty$-category with duality.
A hermitian object of $(\caC,D)$ is a homotopy $C_2$-fixed point $(\X,\tau) $ of $(\Tw(\caC),D).$
A hermitian object $(\X,\tau) $  of $(\caC,D)$ is non-degenerate if the morphism $X: A \to B$ is an equivalence. 
\end{definition}
\begin{notation}
Let $(\caC,D)$ be an $\infty$-category with duality.
Let $$ \caH(\caC,D):=(\Tw(\caC),D)^{hC_2}$$ be the $\infty$-category of hermitian objects and 
$$\H^{\mathrm{nd}}(\caC,D) \subset \caH(\caC,D)$$ the full subcategory spanned by the non-degenerate hermitian objects of $\caC.$
	
\end{notation}
\begin{remark}
	
Let $(\caC,D)$ be an $\infty$-category with duality.
By definition a hermitian object of $(\caC,D)$ is a homotopy $C_2$-fixed point $(\X,\tau) $ of $(\Tw(\caC),D).$
Since the $C_2$-action on $\Tw(\caC)$ sends $f:A \to B$ to $D(f): D(B) \to D(A)$,
a hermitian object of $(\caC,D)$ consists of a morphism $X: A \to B$,
equivalences $A \simeq D(B), B \simeq D(A)$ and a homotopy between $X:A \to B$ and the composition $A \simeq D(B) \xrightarrow{D(X)} D(A) \simeq B$ and coherence data.


\end{remark}

\begin{remark}
	
For every $[n] \in \Delta$ there are natural maps $$[n] \to [n] * [n]^\op, \ [n]^\op \to [n] * [n]^\op,$$ i.e. natural transformations $\id \to e, (-)^\op \to e.$
So for any simplicial space $\caC$ there is a map of simplicial spaces
$$ \Tw(\caC):= \caC \circ e^\op \to (\caC \circ \id) \times (\caC \circ (-)^\op) = \caC \times \caC^\op. $$ 
\end{remark}

\begin{lemma}\label{gghjjnv}

The canonical natural transformation $\Tw \to \id \times (-)^\op$ of $C_2$-equivariant endofunctors of $\caP(\Delta)$ is $C_2$-equivariant.

\end{lemma}

\begin{proof}

The natural transformation $\id \to e$ is adjoint to a $C_2$-equivariant natural transformation $\alpha: \id \coprod (-)^\op \to e $ of functors $\Delta \to \Delta$, whose source carries the trivial action and whose target carries the induced non-trivial action.
We obtain a $C_2$-equivariant natural transformation $\Tw=e^* \to \id^* \times ((-)^\op)^*= \id \times (-)^\op.$

\end{proof}
\begin{construction}
For $(\caC,D) \in \caP(\Delta)^{hC_2}$ the map $\Tw(\caC) \to \caC \times  \caC^\op $ of simplicial spaces refines to a $C_2$-equivariant map $(\Tw(\caC),D) \to \widetilde{\caC \times  \caC^\op} $ by Lemma \ref{gghjjnv}, and so yields a map of simplicial spaces $$\H(\caC,D)= (\Tw(\caC),D)^{hC_2} \to (\widetilde{\caC \times  \caC^\op})^{hC_2} \simeq \caC. $$

\end{construction}

\begin{definition}
A right fibration of simplicial spaces is a map of simplicial spaces $\caC \to \caD$ such that
for any $n \geq 0$ the map $\caC_{[n]} \to \caC_{[0]} \times_{\caD_{[0]}} \caD_{[n]} $ induced by the map $[0] \simeq \{n\} \subset [n]$ is an equivalence.
\end{definition}

This generalizes the notion of right fibration of $\infty$-categories.

\begin{lemma}
Let $\caC$ be a simplicial space. The map $\Tw(\caC) \to \caC \times \caC^\op$ is a right fibration of simplicias spaces.
\end{lemma}

\begin{proof}
	
For any $n \geq 0$ the map $\Tw(\caC)_{[n]} \to \Tw(\caC)_{[0]} \times_{(\caC_{[0]}\times \caC_{[0]^\op})} (\caC_{[n]} \times \caC_{[n]^\op}) $ induced by the map $[0] \simeq \{n\} \subset [n]$ identifies with the canonical map 
$$\caC_{[n]*[n]^\op} \to \caC_{[0]*[0]^\op} \times_{(\caC_{[0]}\times \caC_{[0]^\op})} (\caC_{[n]} \times \caC_{[n]^\op}) $$ induced by the map $[0] \simeq \{n\} \subset [n]$.
The latter map is induced by the map
$$ [0]*[0]^\op \coprod_{[0] \coprod [0]^\op} ([n]\coprod [n]^\op) \to [n]*[n]^\op,$$
which is an equivalence by definition of the join construction. 
	
\end{proof}

\begin{corollary}Let $(\caC,D)$ be an $\infty$-category with duality.
The functor $\H(\caC)\to \caC$ is a right fibration.
		
\end{corollary}


\begin{lemma}\label{hermii}
Let $(\X,\tau)$ be space with $C_2$-action. There is a canonical equivalence over $\X:$
$$\caH^{\mathrm{nd}}(\X,\tau)=\caH(\X,\tau)\simeq (\X,\tau)^{hC_2}.$$	
	
\end{lemma}

\begin{proof}
For every $n \geq 0$ the diagonal map $\X \to \Cat_\infty([n]*[n]^\op,\X)$ is an equivalence.
Hence the $C_2$-equivariant forgetful functor $(\Tw(\X),\tau) \to \widetilde{\X \times \X}$ factors as a canonical $C_2$-equivariant equivalence
$(\Tw(\X),\tau) \simeq (\X, \tau)$ followed by the diagonal map $(\X,\tau) \to \widetilde{\X \times \X}.$
Thus $\caH^{\mathrm{nd}}(\X,\tau)=\caH(\X,\tau) =(\Tw(\X),\tau)^{hC_2} \simeq (\X,\tau)^{hC_2}$ over $\X.$
	
\end{proof}

\begin{corollary}
Let $(\caC,D)$ be an $\infty$-category with duality.
There is a canonical equivalence $$\caH^{\mathrm{nd}}(\caC,D)^\simeq \simeq (\caC^\simeq,D)^{hC_2}.$$
	
\end{corollary}

\begin{proof}
	
The inclusion $(\caC^\simeq,D)\subset (\caC,D)$ of symmetric monoidal $\infty$-categoried with duality induces a $C_2$-equivariant embedding $(\Tw(\caC^\simeq)^\simeq, D) \subset (\Tw(\caC)^\simeq,D)$ whose essential image are the objects of $\Tw(\caC)$ that correspond to equivalences in $\caC.$
Hence the latter embedding induces on homotopy $C_2$-fixed points an equivalence $\caH^{\mathrm{nd}}(\caC^\simeq,D) \simeq \caH^{\mathrm{nd}}(\caC,D)^\simeq.$ By Lemma \ref{hermii} there is a canonical equivalence
$\caH^{\mathrm{nd}}(\caC^\simeq,D)\simeq (\caC^\simeq,D)^{hC_2}.$	

\end{proof}

\section{Direct sum real $K$-theory}
\label{ht235wq}

Direct sum $K$-theory \cite[Definition 8.3]{gepner-groth-nikolaus} is the composition
$$\K : \Cmon(\Cat_\infty) \overset{(-)^\simeq}{\longrightarrow} \Cmon(\caS) \xrightarrow{(-)^\grp} \Grp(\caS) \to \Sp,$$
where the first functor in the composition is the right adjoint of the embedding $\Cmon(\caS) \subset \Cmon(\Cat_\infty)$, which takes the maximal subspace, the second functor takes group completion and the third functor takes the infinite delooping.
By \cite[Theorem 8.6]{gepner-groth-nikolaus} the first functor is lax symmetric monoidal and the second and third functor are symmetric monoidal. Passing to commutative algebras we obtain a functor 
$$\Calg(\Cmon(\Cat_\infty)) \to \Calg(\Sp).$$

Mimicking direct sum K-theory we make the following definition:

\begin{definition}
Direct sum hermitian K-theory $\K_\h $ is the K-theory of non-degenerate hermitian objects:
$$ (\Cmon(\Cat_\infty),(-)^\op)^{hC_2} \overset{\caH^{\mathrm{nd}}}{\longrightarrow}
\Cmon(\Cat_\infty) \xrightarrow{\K} \Sp.$$
\end{definition}

\begin{remark}\label{compi}
Direct sum hermitian K-theory $\K_\h $ factors as
$$(\Cmon(\Cat_\infty),(-)^\op)^{hC_2} \overset{(-)^\simeq}{\longrightarrow}
\Cmon(\caS)[C_2] \xrightarrow{(-)^{hC_2}} \Cmon(\caS)$$$$ \xrightarrow{(-)^\grp} \Grp_{\bE_\infty}(\caS) \to \Sp,$$
where the last two functors take group completion, take the infinite delooping, respectively.
\end{remark}
\begin{proposition}
\label{h4rgrf}
Direct sum hermitian $K$-theory $\K_\h$ is lax symmetric monoidal.
\end{proposition}

\begin{proof}
The first functor in the composition of Remark \ref{compi} is right adjoint to the symmetric monoidal embedding $ \Cmon(\caS)[C_2] \subset (\Cmon(\Cat_\infty),(-)^\op)^{hC_2}$ and so lax symmetric monoidal. The second functor is right adjoint to the symmetric monoidal diagonal functor $$\Cmon(\caS) \to \Cmon(\caS)[C_2].$$
The third and fourth functors in the definition of $\K_\h$ are symmetric monoidal
by \cite[Theorem 8.6]{gepner-groth-nikolaus}.
\end{proof}

Passing to commutative algebras the functor $\K_\h$ induces a functor
$$(\Calg(\Cmon(\Cat_\infty)),(-)^\op)^{hC_2} \to \Calg(\Sp).$$

Next we lift direct sum hermitian K-theory to genuine
$C_2$-spectra, which we define in the following.

\begin{notation}
For every $\infty$-category $\caC$ and $[n] \in \Delta$ let $\Tw([n])$ be the twisted arrow category of $[n]$ and $$\Span(\caC)_n \subset \Cat_\infty(\Tw([n]),\caC)$$ be the full subspace of functors
$F: \Tw([n]) \to \caC$ such that for every 
$0 \leq k \leq k' \leq \ell' \leq \ell \leq n
$ the following square is a pullback square:
$$\xymatrix{F(k \leq \ell) \ar[rr] \ar[d] && F(k \leq \ell') \ar[d] \\
F(k' \leq \ell)  \ar[rr] && F(k' \leq \ell').}$$

\end{notation}

The simplicial space $[\mathrm{n}] \mapsto \Cat_\infty(\Tw([\mathrm{n}]),\caC)$
induces a simplicial space $\Span(\caC): [\mathrm{n}] \mapsto \Span(\caC)_{\mathrm{n}}.$
Moreover the functor $\Cat_\infty \to \Fun(\Delta^\op, \caS), \caC \mapsto \Cat_\infty(\Tw(-),\caC)$ induces a functor $\Span: \Cat_\infty \to \Fun(\Delta^\op, \caS), \caC \mapsto \Span(\caC).$ 

\begin{notation}
Let $\Cat_{\infty}^\mathrm{lex} \subset \Cat_{\infty}$ be the subategory of small $\infty$-categories having finite limits and functors preserving finite limits.
\end{notation}
If $\caC$ is an $\infty$-category that admits pullbacks, the simplicial space $\Span(\caC)$ is a complete Segal space \cite[Proposition 3.4.]{9eb49b0be23f4b049aeeb44c6189ff21}.
So the functor $\Span: \Cat_\infty \to \Fun(\Delta^\op, \caS)$
restricts to a functor $\Span: \Cat_\infty^\mathrm{lex} \to \Cat_\infty$.

\begin{remark}\label{produ}

Let $\caC,\caD$ be $\infty$-categories. 
The canonical equivalence of simplicial spaces 
$$\Cat_\infty(\Tw(-),\caC\times\caD) \simeq \Cat_\infty(\Tw(-),\caC) \times \Cat_\infty(\Tw(-),\caD)$$
restricts to an equivalence $$\Span(\caC  \times \caD) \simeq \Span(\caC) \times \Span(\caD).$$
So the functor $\Span: \Cat_\infty \to \Fun(\Delta^\op, \caS)$ preserves finite products. 
Since the inclusion $\Cat^\mathrm{lex}_\infty \subset \Cat_\infty$ preserves finite products, also the functor $\Span: \Cat_\infty^\mathrm{lex} \to \Cat_\infty$ preserves finite products.
\end{remark}

\begin{remark}

By Lemma \ref{leem} the $\infty$-category $\Cat_\infty^{\prod}$	is preadditive.
Since the inclusion $\Cat^\mathrm{lex}_\infty \subset \Cat_\infty^{\prod}$ preserves finite products, the subcategory $\Cat_\infty^\mathrm{lex} $ is preadditive, too.
So by Proposition \ref{leomor} (4) and Remark \ref{produ} the functor $\Span: \Cat_\infty^\mathrm{lex} \to \Cat_\infty$ uniquely lifts to a finite products preserving functor $\Cat_\infty^\mathrm{lex} \to \Cmon(\Cat_\infty).$
So every $\infty$-category having finite limits induces a symmetric monoidal
structure on $\Span(\caC)$ that is induced by the product of
$\caC$ but is not the product of $\Span(\caC).$
	
\end{remark}

\begin{remark}\label{symot}
Let $\caC$ be an $\infty$-category and $\caD$ an $\infty$-category having finite limits.
The natural transformation $\Tw(-)\to \id$ gives rise to a map $\caC \to \Span(\caC)$ of simplicial spaces.
The functor $$\Span(\Fun(\caC,\caD))\times \caC \to \Span(\Fun(\caC,\caD))\times \Span(\caC) \simeq$$$$ \Span(\Fun(\caC,\caD)\times \caC)\to \Span(\caD)$$ induced by evaluation $\Fun(\caC,\caD)\times \caC\to \caD$ corresponds to a functor
\begin{equation*}\label{eee}
\theta: \Span(\Fun(\caC,\caD)) \to \Fun(\caC,\Span(\caD)).\end{equation*}
The functor $\theta$ is fully faithful.
If $\caC$ is a space, $\theta$ is an equivalence. The latter holds since $\theta_{(-,\caD)}:\Span(\Fun(-,\caD)) \to \Fun(-,\Span(\caD)) $ is a natural transformation of finite products and small colimits preserving functors $\Cat_\infty^{\mathrm{lex}} \to \Cat_\infty$ and 
$\theta_{*,\caD}$ is an equivalence.
Since $\Cat_{\infty}^\mathrm{lex}$ is preadditive and $\theta$ is natural in $\caC$, by Proposition \ref{leomor} (4) the map $\theta$ uniquely refines to a natural map of symmetric monoidal $\infty$-categories.
\end{remark}

\begin{notation}
Let $\Fin$ be the category of finite sets.	
\end{notation}
\begin{definition}
A genuine $C_2$-spectrum is a finite products preserving functor $$\Span(\Fin[C_2]) \to \Sp.$$
Let $$\Sp^{C_2}:= \Fun^{\prod}(\Span(\Fin[C_2]), \Sp)$$
be the $\infty$-category of genuine $C_2$-spectra.
\end{definition}

\begin{notation}
	
\begin{itemize}
\item Evaluation at $* \in \Fin[C_2] \to \Span(\Fin[C_2])$ 
induces a functor $(-)^{C_2}: \Sp^{C_2} \to \Sp$.
\item Restriction along the Yoneda-embedding 
$ B C_2 \subset \Fin[C_2] \to \Span(\Fin[C_2])$
induces a functor $(-)^{u}: \Sp^{C_2} \to \Sp[C_2]$.

\end{itemize}
\end{notation}

\begin{remark}
The $\infty$-category $\Span(\Fin)$ is the free preadditive $\infty$-category on the contractible $\infty$-category, i.e. for any preadditive $\infty$-category $\caC$
evaluation at the image of the singleton under the functor
$\Fin \to \Span(\Fin)$ defines an equivalence
$$ \Fun^{\prod}(\Span(\Fin),\caC) \simeq \caC.$$
\end{remark}
\begin{notation}

Let $t$ be the functor $$(\Cmon(\Cat_\infty), (-)^\op)^{hC_2}[C_2] \simeq (\Cmon(\Cat_\infty), (-)^\op \times \triv)^{h(C_2 \times C_2)} $$$$\to (\Cmon(\Cat_\infty),(-)^\op)^{hC_2},$$
where the last functor restricts the action via the diagonal.
\end{notation}
\begin{definition}
Direct sum real K-theory $\KR$ is the composition
$$ (\Cmon(\Cat_\infty),(-)^\op)^{hC_2} \simeq \Fun^{\prod}(\Span(\Fin), (\Cmon(\Cat_\infty),(-)^\op)^{hC_2}) \xrightarrow{(-)[C_2]}$$$$ \Fun^{\prod}(\Span(\Fin)[C_2], (\Cmon(\Cat_\infty),(-)^\op)^{hC_2}[C_2])
\xrightarrow{\Fun^{\prod}(\theta,t)} $$$$ \Fun^{\prod}(\Span(\Fin[C_2]), (\Cmon(\Cat_\infty),(-)^\op)^{hC_2}) \xrightarrow{\Fun^{\prod}(\Span(\Fin[C_2]),\K_\h)} \Sp^{C_2}=$$$$ \Fun^{\prod}(\Span(\Fin[C_2]), \Sp).$$
\end{definition}

\begin{remark}\label{expli}
Let $(\caC,D) $ be a symmetric monoidal $\infty$-category with duality.
The genuine $C_2$-spectrum $\KR(\caC,D):\Span(\Fin[C_2]) \to \Sp $
sends a finite $C_2$-set $(X,\tau)$ to the group completion of the $\bE_\infty$-space $\caH^{\mathrm{nd}}(\caC^\X,D)^\simeq$ of hermitian objects in the symmetric monoidal $\infty$-category with duality $ (\caC^\X,D^\tau)$, where $\caC^\X$ is the object-wise symmetric monoidal structure on the product 
equipped with the duality
$$ (\caC^\X)^\op \simeq (\caC^\op)^\X \xrightarrow{D^\tau}\caC^\X.$$
	
\end{remark}

\begin{theorem}\label{zum}
Let $(\caC,D) $ be a symmetric monoidal $\infty$-category with duality. There are canonical equivalences 
\begin{enumerate}
\item $\KR(\caC,D)^{C_2} \simeq \K_\h(\caC,D),$	
\item $\KR(\caC,D)^\u \simeq \K(\caC). $
\end{enumerate}

\end{theorem}

\begin{proof}
1. Let $\bar{\caC} \in \Fun^{\prod}(\Span(\Fin), (\Cmon(\Cat_\infty),(-)^\op)^{hC_2})$
correspond to $(\caC,D).$
Let $$\kappa: \Fin[C_2] \to \Span(\Fin[C_2]), \kappa': \Fin \to \Span(\Fin) $$ be the canonical functors and $$\triv: (\Cmon(\Cat_\infty),(-)^\op)^{hC_2} \to (\Cmon(\Cat_\infty),(-)^\op)^{hC_2}[C_2]$$ the functor assigning the trivial action.
Then there is a canonical equivalence $$ \KR(\caC,D)^{C_2} = (\K_\h \circ t \circ \bar{\caC}[C_2]\circ \theta)(\kappa(*)) \simeq (\K_\h \circ t \circ \bar{\caC}[C_2] \circ \kappa'[C_2])(*) $$	
$$\simeq (\K_\h \circ t \circ \triv) (\caC) \simeq \K_\h(\caC). $$

2. Let $\widetilde{\caC \times \caC} \in (\Cmon(\Cat_\infty),(-)^\op)^{hC_2}[C_2]$ be the cofree $C_2$-action on $$\caC\in (\Cmon(\Cat_\infty),(-)^\op)^{hC_2},$$
which also is the free $C_2$-action on $\caC$ since $ \Cmon(\Cat_\infty)$ is preadditive.

Let $\widetilde{\caC\times \caC^\op} \in (\Cmon(\Cat_\infty),(-)^\op)^{hC_2}$ be the cofree $\infty$-category with duality on $\caC\in \Cmon(\Cat_\infty)$.
Projection to the first factor $\caC \times \caC \to \caC$ is adjoint to a
map $\rho: t(\widetilde{\caC \times \caC}) \simeq \widetilde{\caC \times \caC^\op}$ in $(\Cmon(\Cat_\infty),(-)^\op)^{hC_2}$ lying over the equivalence $\caC \times \caC \simeq \caC \times \caC^\op$. So $\rho$ is an equivalence.
Hence there is a canonical equivalence $$\caH^\mathrm{nd}(t(\widetilde{\caC \times \caC})) \simeq \caH^\mathrm{nd}(\widetilde{\caC \times \caC^\op}) \simeq \caC.$$
Note that $C_2 \in \Fin[C_2]$ is the free $C_2$-action on the point.
Thus there is a canonical equivalence $$ \KR(\caC)^{u} = (\K_\h \circ t \circ \bar{\caC}[C_2]\circ \theta)(\kappa(C_2)) \simeq (\K_\h \circ t \circ \bar{\caC}[C_2] \circ \kappa'[C_2])(C_2) $$	
$$\simeq \K_\h(t(\widetilde{\caC \times \caC})) \simeq \K(\caH^\mathrm{nd}(t(\widetilde{\caC \times \caC}))) \simeq \K(\caC).$$

\end{proof}

By \cite[Theorem 2.2.6.2., Proposition 2.2.6.16.]{lurie.HA} for every small $\infty$-category $\caC$ and presentably symmetric monoidal
$\infty$-category $\caD$ there is a presentably symmetric monoidal structure on
the $\infty$-category $\Fun(\caC,\caD)$ given by Day-convolution characterized by the following universal property:
the evaluation functor $  \Fun(\caC,\caD) \times \caC \to \caD$ is symmetric monoidal and for every symmetric monoidal $\infty$-category $\caB$
the canonical functor $$ \Fun^{\otimes,\lax}(\caB, \Fun(\caC,\caD)) \to
\Fun^{\otimes,\lax}(\caB \times \caC,\caD) $$ is an equivalence,
where $\Fun^{\otimes,\lax} (-,-)$ refers to the $\infty$-category of lax symmetric monoidal functors. 

By the next Lemma \ref{locam} the full subcategory $\Fun^{\prod}(\caC,\caD) \subset \Fun(\caC,\caD)$ is a symmetric monoidal localization and so inherits a symmetric monoidal structure if $\caC$ is compatible with finite products.

\begin{lemma}\label{locam}
For every small $\infty$-category $\caC$ having finite products and
presentable $\infty$-category $\caD$ the full subcategory $\Fun^{\prod}(\caC,\caD) \subset \Fun(\caC,\caD)$ is a localization.
If $\caC$ is a small symmetric monoidal $\infty$-category compatible with finite products and $\caD$ is a presentably symmetric monoidal $\infty$-category,
the localization $\Fun^{\prod}(\caC,\caD) \subset \Fun(\caC,\caD)$ is
symmetric monoidal. 	
\end{lemma}

\begin{proof}
Choose a regular cardinal $\kappa$ such that $\caD$ is $\kappa$-accessible.
Let $\caD^\kappa \subset \caD $ be the full subcategory of $\kappa$-compact objects.
For every $Z \in \caC$ let $\mathrm{lan}_Z: \caD \to \Fun(\caC,\caD)$ be the left adjoint to evaluation at $Z.$ 
The full subcategory $\Fun^{\prod}(\caC,\caD) \subset \Fun(\caC,\caD)$ is the localization with respect to the set of morphisms
$$S:= \{ \coprod_{j\in K} \mathrm{lan}_{H(j)}(Y) \to \mathrm{lan}_{(\prod_{j\in K}H(j))}(Y) \mid Y \in \caD^\kappa, H: K \to \caC, K \in \Fin \}.$$

To see that the localization is symmetric monoidal, we need to see that
$S$ is closed under the tensor product.
This follows from the fact that
$\caC$ is compatible with finite products and that there is a canonical equivalence
$$ \mathrm{lan}_Z(Y) \otimes \mathrm{lan}_{Z'}(Y') \simeq \mathrm{lan}_{Z \otimes Z'}(Y \otimes Y')$$
for $Y,Y' \in \caD$, $Z,Z' \in \caC.$
	
\end{proof}

\begin{proposition}
\label{hrf4t63}
Real K-theory $\KR$ is lax symmetric monoidal.
\end{proposition}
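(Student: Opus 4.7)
The plan is to exhibit $K_h$ as a composition of lax symmetric monoidal functors, relying on three general facts used throughout: (i) a smashing localization of $\Pr^L$ is a symmetric monoidal left adjoint; (ii) the right adjoint of a symmetric monoidal functor between symmetric monoidal $\infty$-categories carries a canonical lax symmetric monoidal structure; and (iii) compositions of lax symmetric monoidal functors are lax symmetric monoidal. Given these, the task reduces to inspecting each of the three stages in the definition of $K_h$ and checking that it is either a symmetric monoidal left adjoint or the right adjoint of such.

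First, the tail $\Mon_{E_\infty}(\Spc) \to \Grp_{E_\infty}(\Spc) \to \Sp$ is a composition of smashing localizations of $\Pr^L$ (group completion followed by stabilization), so each factor is symmetric monoidal. This is essentially what is used in \cite[Theorem 8.6]{gepner-groth-nikolaus} to make $K$ itself lax symmetric monoidal.

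Second, $(-)^{hC_2} \colon \Mon_{E_\infty}(\Spc)[C_2] \to \Mon_{E_\infty}(\Spc)$ is the right adjoint of the pullback functor along $BC_2 \to *$. Equipping $\Fun(BC_2,\Mon_{E_\infty}(\Spc)) = \Mon_{E_\infty}(\Spc)[C_2]$ with the pointwise symmetric monoidal structure exhibits both source and target as objects of $E_\infty(\Pr^L)$ and the pullback as a symmetric monoidal left adjoint between them; hence $(-)^{hC_2}$ is lax symmetric monoidal.

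Third, for $(-)^\sim \colon \SymMonCat_\infty^{hC_2} \to \Mon_{E_\infty}(\Spc)[C_2]$, the preceding section exhibits $\Mon_{E_\infty}(\Spc) \to \SymMonCat_\infty$ as a morphism in $E_\infty(\Pr^L)[C_2]$. Applying $(-)^{hC_2}$ inside $E_\infty(\Pr^L)$ — which is licit because $E_\infty(\Pr^L)$ admits limits, as used in Proposition \ref{hte5rrr} — and using Theorem \ref{hjt44r} to identify $\Mon_{E_\infty}(\Spc)^{hC_2} \simeq \Mon_{E_\infty}(\Spc)[C_2]$, realizes $\alpha$ as a morphism in $E_\infty(\Pr^L)$, i.e.\ as a symmetric monoidal left adjoint. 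Its right adjoint $(-)^\sim$ is therefore lax symmetric monoidal, and composing the three stages yields the proposition. The substantive work is not in this assembly but in the earlier organization of every piece as a morphism in $E_\infty(\Pr^L)$ or in its $C_2$-equivariant variant; once that framework is in place, the present statement is essentially bookkeeping.
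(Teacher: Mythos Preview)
Your proof is correct and follows essentially the same route as the paper: decompose $K_h$ into its four constituent functors, observe that $\alpha$ is a map in $E_\infty(\Pr^L)$ so that $(-)^\sim$ is lax symmetric monoidal as its right adjoint, that $(-)^{hC_2}$ is lax symmetric monoidal as the right adjoint to the trivial-action functor, and that the last two maps are symmetric monoidal smashing localizations. You spell out a bit more of the surrounding reasoning (e.g.\ how $\alpha$ arises by taking $hC_2$ inside $E_\infty(\Pr^L)$), but the argument is the same.
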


\begin{proof}
The functor $\theta: \Span(\Fin[C_2]) \to \Span(\Fin)[C_2]$ is symmetric monoidal (Remark \ref{symot}), the functor $$t: (\Cmon(\Cat_\infty),(-)^\op)^{hC_2}[C_2] \to (\Cmon(\Cat_\infty),(-)^\op)^{hC_2}$$
is symmeric mono\-idal. So all functors in the composition of $\KR$ are symmetric monoidal
except $\Fun^{\prod}(\Span(\Fin[C_2]),\K_\h)$, which is lax symmetric monoidal as $\K_\h$ is lax symmetric monoidal (Proposition \ref{h4rgrf}).
\end{proof}

\begin{remark}\label{remop}
We can compose $\KR$ with the lax symmetric monoidal embedding
$$(\Cat_\infty^\preadd, (-)^\op)^{hC_2} \hookrightarrow (\Cmon(\Cat_\infty),(-)^\op)^{hC_2}$$
of Corollary \ref{thh} to obtain a lax symmetric monoidal functor
$$\KR: (\Cat_\infty^\preadd,(-)^\op)^{hC_2} \to \Sp^{C_2}.$$
Passing to commutative algebras we obtain a functor
$$\KR: (\Calg(\Cat_\infty)_\preadd,(-)^\op)^{hC_2} \to \Calg(\Sp^{C_2})$$
assigning to any preadditive symmetric monoidal $\infty$-category with duality a real K-theory spectrum.

\end{remark}

\section{Symmetric monoidal $\infty$-categories with duality}\label{uhus}

In this section we prove that every symmetric monoidal $\infty$-category, in which every object has a dual, admits a canonical symmetric monoidal duality (Theorem \ref{ujkp}).

\begin{definition}
Let $\caC$ be a monoidal $\infty$-category and $\X,\Y \in \caC.$
We say that a morphism $1 \to \Y \otimes \X$ exhibits $\Y$ as right dual of
$\X$ if the induced natural transformation
$\id \to (\Y \otimes (-)) \circ (\X \otimes (-)) $ exhibits
$\Y \otimes (-): \caC \to \caC$ as right adjoint to $\X \otimes (-): \caC \to \caC$.
Similarly, we define left duals. 	
\end{definition}

\begin{remark}
Let $\caC$ be a symmetric monoidal $\infty$-category and $\X,\Y \in \caC.$
Then $\Y$ is a left dual of $\X$ if and only if $\Y$ is a right dual of $\X,$
and we call $Y$ the dual of $X$ denoted by $X^\dual.$
In particular, $X$ is the dual of $X^\dual,$ in symbols $ X \simeq (X^\dual)^\dual.$ 
\end{remark}

\begin{definition}

A symmetric monoidal $\infty$-category is rigid if all objects admit a dual.
\end{definition}
\begin{notation}Let $$\Cmon(\Cat_\infty)^\rig \subset \Cmon(\Cat_\infty)$$ be the full subcategory of rigid symmetric monoidal $\infty$-categories.
Let $$\Cmon(\Cat_\infty)^\rig_\preadd \subset \Cmon(\Cat_\infty)_\preadd \simeq \Calg(\Cat_\infty^{\preadd})$$ be the full subcategory of preadditive rigid symmetric monoidal $\infty$-categories.
	
\end{notation}

\begin{remark} Since the opposite of a rigid symmetric monoidal $\infty$-category is rigid, the non-trivial $C_2$-action on $\Cmon(\Cat_\infty)$ restricts to $\Cmon(\Cat_\infty)^\rig$ and $\Cmon(\Cat_\infty)^\rig_\preadd.$
\end{remark}

In the following we consider the standard examples of preadditive rigid symmetric monoidal $\infty$-categories:
\begin{notation}
Let $R \in \Calg(\Sp)$ be an $\bE_\infty$-ring spectrum.	
	
\begin{itemize}
\item Let $\Perf(\R) \subset \Mod_\R(\Sp)$ be the full subcategory of dualizable $\R$-modules.
		
\item Let $\caP(\R) \subset \Mod_\R(\Sp)$ be the full subcategory of projective $\R$-modules,
i.e. retracts of finite direct sums $\R^{\oplus \ell}$ for some $\ell \geq 0.$	
		
\end{itemize}
	
\end{notation}

\begin{example}Let $R \in \Calg(\Sp)$ be an $\bE_\infty$-ring spectrum.	We have $\caP(\R)\subset \Perf(\R)$ and the symmetric monoidal structure on $\Mod_\R(\Sp)$ restricts to $\Perf(\R)$ and $ \caP(\R)$
and makes $\Perf(\R), \caP(\R)$ to preadditive rigid symmetric monoidal $\infty$-categories.
\end{example}


We will prove the following theorem:

\begin{theorem}\label{ujkp}
There is a canonical section of the forgetful functor $$(\Cmon(\Cat_{\infty})^\rig,(-)^\op)^{hC_2} \to \Cmon(\Cat_\infty)^\rig$$	
equpping a symmetric monoidal $\infty$-category $\caC$ with a symmetric monoidal duality $\caC^\op \to \caC, X \mapsto X^\dual$.
Moreover for any tensor-invertible object $L$ of $\caC$ there is a duality $\caC^\op \to \caC, X \mapsto X^\dual\otimes L$.
	
\end{theorem}

To prove Theorem \ref{ujkp} we classify dualities by certain symmetric bilinear functors in the following sense:

\begin{definition}
Let $\caC $ be an $\infty$-category.
A symmetric bilinear functor $\caC \times \caC \to \caS$ is a $C_2$-equivariant functor
$\widetilde{\caC \times \caC} \to \caS$, where $\caS$ carries the trivial action.	
	
\end{definition}

The next remark follows from Notation and terminology (\ref{noil}):

\begin{remark}\label{iwo}
	
A functor $\caC^\op \times \caC^\op \to \caS$ is classified via the straightening equivalence 
\cite[Theorem 3.2.0.1.]{lurie.HTT} by a right fibration $\caD \to \caC \times \caC$.
A $C_2$-equivariant functor $\widetilde{\caC^\op \times \caC^\op} \to \caS$ is classified by a $C_2$-equivariant right fibration $\caD \to \widetilde{\caC \times \caC}$.
A $C_2$-equivariant functor $\widetilde{\caC^\op \times \caC^\op} \to \caS$ corresponds to a
functor $(\widetilde{\caC^\op \times \caC^\op})_{hC_2} \to \caS$ that is classified by a right fibration $\caD \to (\widetilde{\caC \times \caC})_{hC_2}$.

\end{remark}

\begin{definition}
Let $\caC $ be an $\infty$-category.
A symmetric bilinear functor $\caC \times \caC \to \caS$ is non-degenerate if 
the corresponding functor $\caC \to \caP(\caC^\op)$
factors as an equivalence $\caC \to \caC^\op$.

\end{definition}

\vspace{1mm}
In the following we will show that for every $\infty$-category $\caC$ there is an equivalence
between dualities on $\caC$ and non-degenerate symmetric bilinear functors $\caC^\op \times \caC^\op \to \caS.$
To prove this, it is convenient to work with the more general notion of so-called
pro-dualities and show that there is an equivalence between pro-dualities on $\caC$ and 
symmetric bilinear functors $\caC^\op \times \caC^\op \to \caS.$


\vspace{1mm}

The non-trivial $C_2$-action on $\Cat_{\infty}$ induces a $C_2$-action
on $ \Cat_{\infty/ [1]}.$

\begin{definition}

An $\infty$-category with pro-duality is a duality preserving functor $\caM \to [1]$. 

\end{definition}

The $\infty$-category of small $\infty$-categories with pro-duality is $(\Cat_{\infty/ [1]},(-)^\op)^{hC_2}.$

\begin{notation}

Let $\caM \to [1]$ be an $\infty$-category with pro-duality and set $\caC:=\caM_0, \caD:=\caM_1$. The structure of a pro-duality provides equivalences
$\alpha: \caC^\op \simeq \caD, \beta: \caD^\op \simeq \caC$ such that $\alpha^\op=\beta^{-1}.$
We call $\caC$ the underlying $\infty$-category and call
the functor $$D: \caC^\op \times \caC^\op \to \caS, (X,Y) \mapsto \caM(X,\alpha(Y))$$
the pro-duality functor.
We say that a duality preserving functor $\caM \to [1]$ equips $\caC$ with a pro-duality
and write $(\caC,D)$ for an $\infty$-category with pro-duality leaving the remaining structure notationally implicite.
A pro-duality provides in particular an equivalence $\gamma_{X,Y}: D(X,Y) \simeq D(Y,X)$
natural in $X,Y \in \caC$ such that $\gamma_{X,Y} \circ \gamma_{Y,X}=\id.$


A map of $\infty$-categories with pro-duality $(\caC,D) \to (\caD,E)$, i.e. a morphism in $(\Cat_{\infty/ [1]},(-)^\op)^{hC_2}$, gives a functor $\phi:\caC \to \caD$ and a map
$\kappa: D \to E \circ (\phi^\op \circ \phi^\op) $ of functors $\caC^\op \times \caC^\op \to \caS.$

\end{notation}

\begin{definition}
	
A map of $\infty$-categories with pro-duality $(\caC,D) \to (\caD,E)$ preserves the pro-duality
if for every $X \in \caC$ the map $\kappa(-,X): D(-,X)\to E(-,\phi(X)) \circ \phi^\op$
corresponds to an equivalence $(\phi^\op)_!(D(-,X)) \to E(-,\phi(X)).$
	
\end{definition}




\begin{remark}

The $C_2$-equivariant inclusion $ \Cat_{\infty} \to \Cat_{\infty/ [1]}, \caC \mapsto \caC \times [1]$ induces an inclusion $ (\Cat_\infty,(-)^\op)^{hC_2} \hookrightarrow (\Cat_{\infty/ [1]},(-)^\op)^{hC_2}$
of the $\infty$-category of small $\infty$-categories with duality into the $\infty$-category of small $\infty$-categories with pro-duality, via which we view $\infty$-categories  $(\caC,D)$ with duality as $\infty$-categories with pro-duality
$(\caC, \caC^\op \times \caC^\op \to \caS, (X,Y)\mapsto \caC(X,D(Y))).$
\end{remark}

\begin{lemma}\label{ito}
	
\begin{enumerate}
\item A small $\infty$-category with pro-duality $(\caC,D)$ belongs to the image of the inclusion $ (\Cat_\infty,(-)^\op)^{hC_2} \hookrightarrow (\Cat_{\infty/ [1]},(-)^\op)^{hC_2}$
if and only if the functor $\caC^\op \to \caP(\caC), X \mapsto D(-, X)$ lands in $\caC$ and the resulting functor $\caC^\op \to \caC$ is an equivalence.

\item Let $(\caC,D), (\caD,E)$ be $\infty$-categories with pro-duality in the image of the inclusion $ (\Cat_\infty,(-)^\op)^{hC_2} \hookrightarrow (\Cat_{\infty/ [1]},(-)^\op)^{hC_2}$.
A map of $\infty$-categories with pro-duality $(\caC,D) \to (\caD,E)$ belongs to the image of the inclusion $ (\Cat_\infty,(-)^\op)^{hC_2} \hookrightarrow (\Cat_{\infty/ [1]},(-)^\op)^{hC_2}$ if and only if it preserves the pro-duality.

\end{enumerate}	

\end{lemma}

\begin{proof}
(1): Note that a small $\infty$-category with pro-duality $\caC,D)$ represented by a duality preserving functor $\caM \to [1]$ belongs to the image of the inclusion $ (\Cat_\infty,(-)^\op)^{hC_2} \hookrightarrow (\Cat_{\infty/ [1]},(-)^\op)^{hC_2}$
if and only if the functor $\caM \to [1]$ lies in the image of the inclusion 
$(-)\times[1]: \Cat_{\infty} \hookrightarrow \Cat_{\infty/ [1]}$. This is equivalent to the condition that the functor $\caM \to [1]$ is a cartesian fibration classifying an equivalence $\kappa: \caM_1 \to \caM_0.$
By the definition of cartesian morphism this is equivalent to say that 
the functor $\caM_1 \to \caP(\caM_{0}), X \mapsto \caM(-,X)$ lands in the full subcategory of representables and the resulting functor $\caM_1 \to \caM_0$, which identifies with $\kappa$, is an equivalence.
Composing with the equivalence $\caM_0^\op \simeq \caM_1$ the latter condition is equivalent to say that the functor $\caC^\op \to \caP(\caC), X \mapsto D(-, X)$ lands in $\caC$ and the resulting functor $\caC^\op \to \caC$ is an equivalence.

(2): Let $(\Cat_{\infty/[1]})' \subset \Cat_{\infty/[1]}$ be the subcategory of functors
$\caB \to [1]$ that are cocartesian and cartesian fibrations and functors over $[1]$ that preserve cocartesian and cartesian morphisms. The $\infty$-category with $C_2$-action
$(\Cat_{\infty/[1]},(-)^\op)$ restricts to an $\infty$-category with $C_2$-action
$((\Cat_{\infty/[1]})',(-)^\op)$.
The $C_2$-equivariant inclusion $(-)\times [1]: (\Cat_\infty,(-)^\op) \to (\Cat_{\infty/[1]},(-)^\op)$ induces an embedding
$ (\Cat_\infty, (-)^\op) \to ((\Cat_{\infty/[1]})',(-)^\op),$ which induces an embedding
$ (\Cat_\infty, (-)^\op)^{hC_2} \to ((\Cat_{\infty/[1]})',(-)^\op)^{hC_2}.$

Let $(\caC,D), (\caD,E)$ be $\infty$-categories with pro-duality in the image of the inclusion $ (\Cat_\infty,(-)^\op)^{hC_2} \hookrightarrow (\Cat_{\infty/ [1]},(-)^\op)^{hC_2}$
represented by duality preserving functors $\caM \to [1], \caN \to [1],$ respectively,
and $\phi: (\caC,D) \to (\caD,E)$ a map of $\infty$-categories with pro-duality represented by a duality preserving functor $\rho: \caM \to \caN$ over $[1].$
Then $\phi$ belongs to the image of the inclusion $ (\Cat_\infty,(-)^\op)^{hC_2} \hookrightarrow (\Cat_{\infty/ [1]},(-)^\op)^{hC_2}$ if and only if $\rho$ is a map of cocartesian and cartesian fibrations over $[1].$ Since $\rho^\op$ is equivalent to $\rho$ by the fixed point datum, this is equivalent to say that $\rho$ is a map of cartesian fibrations over $[1].$

The map $\phi$ preserves the duality if and only if for every $X \in \caC$ the induced map $\lambda: \caM(-,X)\to \caN(-,\rho(X)) \circ \rho^\op$
corresponds to an equivalence $\lambda': (\phi^\op)_!(\caM(-,X)) \to \caN(-,\phi(X)).$
Since $\caM \to [1], \caN \to [1]$ are cartesian fibrations classifying functors $\alpha: \caM_1 \to \caM_0, \beta: \caM_1 \to \caN_0$, respectively, the map $\lambda$ identifies with the map $ \caC(-,\alpha(X))\to \caD(-,\beta(\rho(X))) \circ \phi^\op$
and the map $\lambda'$ identifies with the map 
$ (\phi^\op)_!(\caC(-,\alpha(X))) \simeq \caD(-,\phi(\alpha(X))) \to \caD(-,\beta(\rho(X)))$
representing the canonical map $\phi(\alpha(X) \to \beta(\rho(X))$.
The latter map is an equivalence if and only if $\rho$ is a map of cartesian fibrations over $[1].$
	
\end{proof}

Next we give a slightly different description of the $\infty$-category of small $\infty$-categories with pro-duality (Corollary \ref{jull}):

\begin{lemma}\label{jul}	
For every small $\infty$-category with duality $\caC$ there is a canonical 
$C_2$-action on $\Cat_{\infty/\caC}$, denoted by $(-)^\op$, such that the forgetful functor refines to a $C_2$-equivariant functor $(\Cat_{\infty/\caC},(-)^\op) \to (\Cat_{\infty},(-)^\op) $ and an equivalence
$$(\Cat_{\infty/ \caC},(-)^\op)^{hC_2}\simeq ((\Cat_\infty,(-)^\op)^{hC_2})_{/\caC}$$
over $(\Cat_\infty,(-)^\op)^{hC_2}.$

\end{lemma}

\begin{proof}
The $C_2$-action on $\Cat_\infty$ gives rise to a $C_2$-action on $\Fun([1],\Cat_\infty)$
such that the functor $\rho: \Fun([1],\Cat_\infty) \to \Cat_\infty$ evaluating at the target is $C_2$-equivariant.
Thus the pullback $\Cat_{\infty/ \caC}$ of the $C_2$-equivariant functor $\rho$ along the $C_2$-equivariant functor $* \to \Cat_\infty$ corresponding to the homotopy $C_2$-fixed point $\caC$ of $\Cat_\infty$ carries a $C_2$-action and $(\Cat_{\infty/ \caC})^{hC_2}$ is the fiber
of the induced functor $\rho^{hC_2}: \Fun([1],\Cat_\infty)^{hC_2} \to (\Cat_\infty,(-)^\op)^{hC_2}$
over $\caC.$ The functor $\rho^{hC_2}$ identifies with the functor
$\Fun([1],(\Cat_\infty,(-)^\op)^{hC_2}) \to (\Cat_\infty,(-)^\op)^{hC_2}$ evaluating at the target whose fiber over $\caC$ is $((\Cat_\infty,(-)^\op)^{hC_2})_{/\caC}.$


\end{proof}

\begin{corollary}\label{jull}
There is a canonical $C_2$-action on $\Cat_{\infty/[1]}$ and a canonical equivalence
$(\Cat_{\infty/ [1]},(-)^\op)^{hC_2}\simeq ((\Cat_\infty,(-)^\op)^{hC_2})_{/[1]}.$

\end{corollary}	

\begin{notation}
Let $\caR \subset \Fun([1],\Cat_\infty)$ be the full subcategory of right fibrations.
\end{notation}


The following definition is motivated by Remark \ref{iwo}:

\begin{definition}
The $\infty$-category $\Sym$ of small $\infty$-categories equipped with a symmetric bilinear functor is the pullback:
$$\xymatrix{\Sym \ar[rr]^{} \ar[d] && \caR \ar[d]^{}\\
\Cat_{\infty}\ar[rr] && \Cat_{\infty},}$$
where the right vertical functor evaluates at the target and the bottom horizontal functor
sends $\caC $ to $(\widetilde{\caC \times \caC})_{hC_2}$.
	
	
\end{definition}

We prove the following theorem, for which we use the twisted arrow right fibration $\Tw(\caC)\to \caC \times \caC^\op$ associated to any $\infty$-category $\caC,$
which by straightening classifies the mapping space functor $\caC^\op \times \caC \to \caS$
\cite[Proposition 5.2.1.3.]{lurie.HTT}.

\begin{theorem}\label{main}
There is a canonical equivalence \begin{equation}\label{ooop}
(\Cat_{\infty/ [1]},(-)^\op)^{hC_2} \simeq \Sym\end{equation}
over $\Cat_{\infty}$ that sends an $\infty$-category with pro-duality 
$(\caC,D)$ to the right fibration $\caD \to \widetilde{\caC \times \caC}$
classifying $D.$
Under this equivalence small $\infty$-categories with duality correspond to non-degenerate symmetric bilinear functors.





\end{theorem}

Theorem \ref{main} follows from the next lemma and proposition:

\begin{lemma}\label{Lemus}

The outer square in the following diagram is a pullback square:
$$\xymatrix{\Cat_\infty \times_{\Cat_\infty[C_2]} \caR[C_2] \ar[r]^{} \ar[d] & \caR[C_2] \ar[d]^{}\ar[r]& \caR \ar[d]\\
\Cat_{\infty}\ar[r] & \Cat_{\infty}[C_2]\ar[r] & \Cat_\infty,}$$
where the middle and right vertical functor evaluate at the target, the left bottom horizontal functor sends $\caC $ to $\widetilde{\caC \times \caC}$ and the right bottom horizontal functor takes homotopy $C_2$-coinvariants.

\end{lemma}

\begin{proof}
	
Since $B C_2$ is a space, straightening \cite[Theorem 3.2.0.1.]{lurie.HTT} gives an equivalence
$ \Cat_\infty[C_2] \simeq \Cat_{\infty/ B C_2}.$ 
Hence we have a right fibration $ \Cat_\infty[C_2] \simeq \Cat_{\infty/ B C_2} \to  \Cat_{\infty} $ that sends a small $\infty$-category with $C_2$-action
$\caD$ classified by a functor $ X \to B C_2$ to $X$, where $X$ has the universal property of the coinvariants $\caD_{hC_2}$, the colimit of the functor $BC_2 \to \Cat_\infty$ encoding the $C_2$-action on $\caD$, by \cite[Corollary 3.3.4.3.]{lurie.HTT}.
The right fibration $ \Cat_\infty[C_2] \to \Cat_{\infty} $ yields an equivalence $$\Fun([1],  \Cat_\infty[C_2])
\simeq  \Cat_\infty[C_2] \times_{  \Cat_{\infty} } \Fun([1], \Cat_{\infty}) $$ over $  \Cat_\infty[C_2]$ that restricts to an equivalence
$\caR[C_2]
\simeq \Cat_\infty[C_2] \times_{  \Cat_{\infty} } \caR $ over $ \Cat_\infty[C_2]. $

\end{proof}

\begin{proposition}\label{theo}

There is a pullback square of $\infty$-categories with $C_2$-action:
\begin{equation}\label{hih}
\xymatrix{(\Cat_{\infty/ [1]},(-)^\op)  \ar[rr]^{} \ar[d] && (\caR,\triv) \ar[d]^{}\\
\underbrace{(\Cat_{\infty/ \{0,1\}},(-)^\op)}_{\widetilde{\Cat_{\infty} \times \Cat_{\infty}}}\ar[rr] && (\Cat_{\infty},\triv),}\end{equation}
where the right vertical functor evaluates at the target, the bottom horizontal functor assigns the product, the left vertical functor takes the fiber over $0,1$ and 
the top horizontal functor assigns to any functor $\caM \to [1]$ the right fibration
$ \caD \to \caM_0 \times \caM_1^\op$ classifying the functor $\caM_0^\op \times \caM_1 \subset \caM^\op \times \caM \xrightarrow{\caM}\caS.$

\end{proposition}

\begin{proof}

By Lemma \ref{gghjjnv} there is a 
$C_2$-equivariant functor $$\gamma: \Cat_\infty \to \widetilde{\Cat_\infty \times \Cat_\infty} \times_{\Cat_\infty} \caR, \ \caC \mapsto (\caC,\caC^\op, \Tw(\caC) \to \caC \times \caC^\op).$$ 
Let $\rS$ be an $\infty$-category with duality. The functor $\gamma$ gives rise to a $C_2$-equivariant functor
$ \gamma_\rS: $$$ \Cat_{\infty/\rS} \to (\Cat_{\infty/\rS} \times \Cat_{\infty/\rS^\op}) \times_{\Cat_\infty} \caR,$$$$ (\caC \to \rS) \mapsto (\caC \to \rS, \caC^\op \to \rS^\op, \Tw(\caC) \to \caC \times \caC^\op), $$
where the $C_2$-action on $(\Cat_{\infty/\rS} \times \Cat_{\infty/\rS^\op})$
switches the factors and applies the duality of $\rS$ and the functor
$(\Cat_{\infty/\rS} \times \Cat_{\infty/\rS^\op}) \to \Cat_\infty$ in the pullback is the composition of the forgetful functor
$\nu': (\Cat_{\infty/\rS} \times \Cat_{\infty/\rS^\op}) \to \widetilde{\Cat_\infty \times \Cat_\infty}$ and the product functor $\mu: \widetilde{\Cat_\infty \times \Cat_\infty} \to \Cat_\infty.$

For every $\s \in \rS$ let $\omega$ be the functor $\Cat_{\infty/\rS} \to \Cat_{\infty}$ taking fiber over $\s$ and $\omega'$ the $C_2$-equivariant functor $ (\Cat_{\infty/\rS} \times \Cat_{\infty/\rS^\op}) \to \widetilde{\Cat_\infty \times \Cat_\infty}$
corresponding to the composition $ (\Cat_{\infty/\rS} \times \Cat_{\infty/\rS^\op}) \xrightarrow{q} \Cat_{\infty/\rS} \xrightarrow{\omega} \Cat_{\infty}$, where the first functor is projection to the first factor.

Let $\nu $ be the forgetful functor $\Cat_{\infty/\rS} \to \Cat_{\infty}$.
There is a canonical natural transformation $\rho: \omega \to \nu$, whose component at a functor $\caC \to \rS$ is the functor $\caC_\s \to \caC$.
The natural transformation $q \circ \rho: q \circ\omega \to q \circ\nu$
gives rise to a $C_2$-equivariant natural transformation $\rho' : \omega' \to \nu'.$
Composing with the $C_2$-equivariant product functor $\mu: \widetilde{\Cat_\infty \times \Cat_\infty} \to \Cat_\infty$ we obtain a $C_2$-equivariant natural transformation $\mu \circ \rho' : \mu \circ \omega' \to \mu \circ \nu': (\Cat_{\infty/\rS} \times \Cat_{\infty/\rS^\op}) \to \Cat_\infty.$
Since $\caR \to \Cat_\infty$ is a cartesian fibration, $\mu \circ \rho'$
gives rise to a $C_2$-equivariant functor
$$\kappa: (\Cat_{\infty/\rS} \times \Cat_{\infty/\rS^\op}) \times_{\Cat_\infty} \caR \to (\Cat_{\infty/\rS} \times \Cat_{\infty/\rS^\op}) \times_{\Cat_\infty} \caR, $$
$$ (\caC \to \rS, \caD \to \rS^\op, \caB \to \caC \times \caD) \mapsto (\caC \to \rS, \caD \to \rS^\op, (\caC_\s \times \caD_{\tau(s)}) \times_{(\caC \times \caD)} \caB \to \caC_\s \times \caD_{\tau(s)}.$$
where the $C_2$-equivariant functor in the pullback on the left hand side is $ \mu \circ \nu'$ and on the right hand side is $ \mu \circ \omega'$.
The $C_2$-equivariant composition $$ \Cat_{\infty/\rS} \xrightarrow{\gamma_\rS} (\Cat_{\infty/\rS} \times \Cat_{\infty/\rS^\op}) \times_{\Cat_\infty} \caR \xrightarrow{\kappa} (\Cat_{\infty/\rS} \times \Cat_{\infty/\rS^\op}) \times_{\Cat_\infty} \caR $$
$$ (\caC \to \rS)\mapsto (\caC \to \rS, \caC^\op \to \rS^\op, (\caC_\s \times \caC_{\tau(\s)}^\op) \times_{(\caC \times \caC^\op)} \Tw(\caC) \to \caC_\s \times \caC_{\tau(\s)}^\op$$ 
for $\rS=[1]$ and $\s=0$ gives a $C_2$-equivariant functor
$$\Cat_{\infty/[1]} \to (\Cat_{\infty/[1]} \times \Cat_{\infty/[1]}) \times_{\Cat_\infty} \caR $$
over $ \Cat_{\infty/[1]} \times \Cat_{\infty/[1]}$, which corresponds to the desired
$C_2$-equivariant functor
$$\zeta: \Cat_{\infty/[1]} \to \widetilde{\Cat_{\infty} \times \Cat_{\infty}} \times_{\Cat_\infty} \caR $$
over $\widetilde{\Cat_{\infty} \times \Cat_{\infty}}.$
By \cite[Theorem 5.2.]{heine2024localglobalprincipleparametrizedinftycategories} the functor $\zeta$ is an equivalence.

\end{proof}


\begin{corollary}\label{iglo}
The pullback square (\ref{hih}) of Proposition \ref{theo} induces on homotopy $C_2$-fixed points a pullback square
\begin{equation*}
\xymatrix{(\Cat_{\infty/ [1]},(-)^\op)^{hC_2}  \ar[rr]^{} \ar[d] && \caR[C_2] \ar[d]^{}\\
\Cat_{\infty} \ar[rr] && \Cat_{\infty}[C_2],}\end{equation*}
where the bottom horizontal functor sends $\caC $ to $\widetilde{\caC \times \caC}$.
	
\end{corollary}

\begin{remark}\label{hallos}

The composition $$ \caS[C_2] \subset (\Cat_\infty,(-)^\op)^{hC_2} \subset (\Cat_{\infty/ [1]},(-)^\op)^{hC_2} \to \caR[C_2] \to \Cat_{\infty}[C_2]$$
is the canonical embedding, where the middle functor is the top functor in the pullback square of Corollary \ref{iglo} and the last functor evaluates at the target.

\end{remark}

\begin{proof}[Proof of Theorem \ref{main}]
The first statement follows immediately from Corollary \ref{iglo} and Lemma \ref{Lemus}.
By Remark \ref{ito} small $\infty$-categories with duality correspond to non-degenerate symmetric bilinear functors.

\end{proof}

Now we are ready to construct for any symmetric monoidal $\infty$-category $\caC$
a pro-duality on $\caC.$ 

\begin{definition}
A symmetric monoidal $\infty$-category with pro-duality is a commutative monoid in $(\Cat_{\infty/[1]},(-)^\op)^{hC_2}.$
	
\end{definition}
The $\infty$-category of symmetric monoidal $\infty$-categories with pro-duality is $$
\Cmon((\Cat_{\infty/ [1]},(-)^\op)^{hC_2}) \simeq (\Cmon(\Cat_{\infty/ [1]}),(-)^\op)^{hC_2}.$$
\begin{corollary}\label{ihol}
There is a canonical pullback square 
$$\xymatrix{\Cmon((\Cat_{\infty/ [1]},(-)^\op)^{hC_2}) \ar[rr]^{} \ar[d] && \Cmon(\caR) \ar[d]^{}\\
\Cmon(\Cat_{\infty})\ar[rr] && \Cmon(\Cat_{\infty}),}$$
where the right vertical functor evaluates at the target and the bottom horizontal functor
sends $\caC $ to $(\widetilde{\caC \times \caC})_{hC_2}$.

\end{corollary}

\begin{notation}\label{waru}
	
Let $\caC$ be a symmetric monoidal $\infty$-category.
The tensor product functor $\otimes:\caC\times \caC \to \caC$ refines to a $C_2$-equivariant 
symmetric monoidal functor $\widetilde{\caC\times\caC} \to (\caC,\triv)$
corresponding to a symmetric monoidal functor $(\widetilde{\caC\times\caC})_{hC_2} \to \caC.$
For every object $A$ in $\caC$ the pullback $(\widetilde{\caC\times\caC})_{hC_2} \times_{\caC} \caC_{/A} \to (\widetilde{\caC\times\caC})_{hC_2}$ along $(\widetilde{\caC\times\caC})_{hC_2} \to \caC$ is a right fibration classifying a functor $(\widetilde{\caC^\op\times\caC^\op})_{hC_2} \to \caS$ corresponding to a $C_2$-equivariant functor
$$D_A: \widetilde{\caC^\op\times\caC^\op} \to (\caS, \triv), (X,Y) \mapsto \caC(X\otimes Y,A),$$ and so corresponds under the equivalence of Theorem \ref{main} 1. to an $\infty$-category with pro-duality denoted by $(\caC,D_A).$
For $A$ the tensor unit we write $D_A=(-)^\dual.$

\end{notation}

\begin{remark}
For every commutative algebra $A$ in $\caC$ the functor $\caC_{/A} \to \caC$ is symmetric monoidal. Thus the right fibration $(\widetilde{\caC\times\caC})_{hC_2} \times_{\caC} \caC_{/A} \to (\widetilde{\caC\times\caC})_{hC_2}$ is a symmetric monoidal functor and so corresponds under the equivalence of Corollary \ref{ihol} to a symmetric monoidal $\infty$-category with pro-duality refining $(\caC,D_A).$


	
\end{remark}

\begin{definition}
	
A right fibration $\caA \to \caB$ is representable if $\caA$ has a final object.
This is equivalent to say that $\caA \to \caB$ classifies a representable presheaf.

\end{definition}
\begin{notation}
Let $\caU \subset \caR$ be the full subcategory of representable right fibrations.
\end{notation}

\begin{theorem}\label{thhh}

\begin{enumerate}
\item There is a canonical finite products preserving functor $$\Phi: \Cmon(\Cat_\infty) \times_{\Cat_\infty} \caU \to \Cmon(\Cat_\infty) \times_{\Cat_\infty} (\Cat_{\infty/[1]},(-)^\op)^{hC_2}$$
over $ \Cmon(\Cat_\infty)$
sending a symmetric monoidal $\infty$-category $\caC$ and an object $A$ of $\caC$ to the  pro-duality $D_A$ 
on $\caC.$ 

\item If $\caC$ is rigid and $\L$ is tensor-invertible, the pro-duality $D_A$ on $\caC$ is a duality.

\item For any symmetric monoidal functor $\alpha: \caC \to \caD$ between rigid symmetric monoidal $\infty$-categories and every object $A$ of $\caC$ the map $\Phi(\alpha): (\caC, D_A) \to (\caD, D_{\alpha(A)})$ 
preserves the pro-duality.
\end{enumerate}
\end{theorem}

\begin{proof}

(1): Since right fibrations are stable under pullback, evaluation at the target $ \caR \to \Cat_\infty$ is a cartesian fibration classifying a functor $$ \Cat_\infty^\op \to \widehat{\Cat}_\infty, \ \caC \mapsto \caR_\caC \simeq \caP(\caC):= \Fun(\caC^\op, \caS),$$
which is the canonical functor $\Fun((-)^\op,\caS)$ by \cite[Corollary A.32.] {Gepner2017LaxCA}.
Thus a natural transformation $\theta:\mathrm{F} \to \G$ of functors
$ \Cmon(\Cat_\infty) \to \Cat_\infty$ yields a map of cartesian fibrations
$\Cmon(\Cat_\infty) \times_{\Cat_\infty} \caR \to  \Cmon(\Cat_\infty) \times_{\Cat_\infty} \caR$ over $\Cmon(\Cat_\infty)$
that induces on the fiber over any $\caC \in \Cmon(\Cat_\infty)$
the functor $$\theta_\caC^\ast: \caP(\G(\caC)) \to \caP(\mathrm{F}(\caC)).$$
We apply this to the canonical map from the forgetful functor
$ \Cmon(\Cat_\infty) \to \Cat_\infty$ followed by the functor 
$\Cat_\infty \to  \Cat_\infty, \ \caC \mapsto (\caC \times \caC)_{h C_2}$ to the forgetful functor $ \Cmon(\Cat_\infty) \to \Cat_\infty$ (component-wise induced by the multiplication map) to obtain a map of cartesian fibrations
$$\Psi: \Cmon(\Cat_\infty) \times_{\Cat_\infty} \caR \to  \Cmon(\Cat_\infty) \times_{\Cat_\infty} \caR$$ over $\Cmon(\Cat_\infty)$
that induces on the fiber over any $\caC \in \Cmon(\Cat_\infty)$
the functor $$\caP(\caC) \to \caP((\caC \times \caC)_{h C_2})$$
that precomposes with the functor $\otimes: (\caC \times \caC)_{h C_2} \to \caC$.

The desired functor is the restriction
$$\Cmon(\Cat_\infty) \times_{\Cat_\infty} \caU 
\subset \Cmon(\Cat_\infty) \times_{\Cat_\infty} \caR \xrightarrow{\Psi}
\Cmon(\Cat_\infty) \times_{\Cat_\infty} \caR$$$$
\simeq \Cmon(\Cat_\infty) \times_{\Cat_\infty} (\Cat_{\infty/[1]},(-)^\op)^{hC_2},$$
which preserves finite products, where the last equivalence is by Theorem \ref{main}.

(2): If $\caC$ is rigid and $A$ is tensor-invertible, the associated pro-duality $D_A$ on $\caC$ is a duality: this follows from Lemma \ref{ito} (1) because the functor
$\caC^\op \to \caP(\caC), X \mapsto \caC((-)\otimes X,A)$ factors as 
$$ \caC^\op \xrightarrow{(-)^\dual} \caC \xrightarrow{ (-)\otimes A} \caC \hookrightarrow \caP(\caC),$$ where the latter is the Yoneda-embedding. The functor
$(-)\otimes A: \caC \to \caC$ is an equivalence because $A$ is tensor-invertible.
The functor $(-)^\dual: \caC^\op \to \caC$ is an equivalence by \cite[Lemma 3.10.]{heine2024topologicalmodelcellularmotivic}.

(3): Under the conditions of (3) the functor $D_A: \caC^\op \times \caC^\op \to \caS, (X,Y)\mapsto \caC(X\otimes Y,A)$ identifies with the functor $\caC(X, [Y,A]_\caC)$,
where $[-,-]$ is the internal hom of $\caC.$
For every $X \in \caC$ the canonical map 
$$D_A(-,X) \to D_{\alpha(A)}(-,\alpha(X)) \circ \alpha^\op$$
identifies with the canonical map
$$\caC(-,\X^\dual \otimes A) \to \caD(-,\alpha(X)^\dual\otimes\alpha(A)) \circ \alpha^\op.$$
The latter corresponds to a map 
$\alpha^\op_!(\caC(-,\X^\dual \otimes A)) \to \caD(-,\alpha(X)^\dual\otimes\alpha(A)) $,
which is an equivalence by \cite[Proposition 5.2.6.3.]{lurie.HTT}.



\end{proof}

\begin{corollary}

There is a canonical functor $$\Cmon(\caU) \to (\Cmon(\Cat_{\infty/[1]}), (-)^\op)^{hC_2}$$
over $ \Cmon(\Cat_\infty)$
sending a symmetric monoidal $\infty$-category $\caC$ and a commutative algebra
$A $ in $\caC$ to the symmetric monoidal pro-duality $(\caC,D_A).$

\end{corollary}

\begin{proof}

The functor $\Phi$ over $\Cmon(\Cat_\infty)$ preserves finite products and so
gives rise to the desired functor
$$\Cmon(\caU) \simeq \Cmon(\Cmon(\Cat_\infty) \times_{\Cat_\infty} \caU) \to$$$$ \Cmon(\Cmon(\Cat_\infty) \times_{\Cat_\infty} (\Cat_{\infty/[1]},(-)^\op)^{hC_2}) \simeq (\Cmon(\Cat_{\infty/[1]}), (-)^\op)^{hC_2}$$ over $\Cmon(\Cat_\infty)$.	

\end{proof}

The diagonal embedding $\Cat_{\infty} \to \Fun([1], \Cat_\infty), \caC \mapsto (\id: \caC \to \caC)$
induces an embedding $\Cat_{\infty} \to \caU$ that preserves finite products and so yields an embedding $\Cmon(\Cat_{\infty}) \to \Cmon(\caU).$
Restricting along this embedding we obtain the following corollary, which gives Theorem \ref{ujkp}, where for the second part of the statement we use Lemma \ref{ito}.

\begin{corollary}\label{coukl}

There is a canonical functor $$\Cmon(\Cat_\infty) \to (\Cmon(\Cat_{\infty/[1]}), (-)^\op)^{hC_2}$$
over $ \Cmon(\Cat_\infty)$
equipping a symmetric monoidal $\infty$-category $\caC$ with a pro-duality $(\caC,(-)^\dual).$
The latter functor restricts to a section of the forgetful functor $$(\Cmon(\Cat_{\infty})^\rig,(-)^\op)^{hC_2} \to \Cmon(\Cat_\infty)^\rig.$$	

\end{corollary}

\begin{corollary}
There is a $C_2$-equivariant equivalence $$(\Cmon(\Cat_{\infty})^\rig,(-)^\op) \simeq (\Cmon(\Cat_{\infty})^\rig,\triv).$$ 

\end{corollary}

\begin{proof}

The forgetful functor $(\Cmon(\Cat_{\infty})^\rig,(-)^\op)^{hC_2} \to \Cmon(\Cat_\infty)^\rig$ admits a canonical section by Corollary \ref{coukl}.
This section $$\Cmon(\Cat_\infty)^\rig \to (\Cmon(\Cat_{\infty})^\rig,(-)^\op)^{hC_2}$$ corresponds to a $C_2$-equivariant functor $$(\Cmon(\Cat_\infty)^\rig,\triv) \to (\Cmon(\Cat_{\infty})^\rig,(-)^\op)$$ whose underlying functor after forgetting the $C_2$-actions is the identity.

\end{proof}


\begin{lemma}\label{aga}
Let $\X$ be a grouplike $\bE_\infty$-space (seen as rigid symmetric monoidal $\infty$-category). 
The grouplike $\bE_\infty$-space with $C_2$-action $(X,(-)^\dual)$ of Notation \ref{waru} is the fiber of the $C_2$-equivariant codiagonal map $\mu: \widetilde{\X\oplus\X} \to (\X,\triv)$.
\end{lemma}

\begin{proof}
	
Let $\theta$ be the map of grouplike $\bE_\infty$-spaces $\theta: (\widetilde{\X\times\X})_{hC_2} \times_{\X} * \to (\widetilde{\X\times\X})_{hC_2}$, that is the fiber of the map $(\widetilde{\X\oplus\X})_{hC_2} \to \X$ corresponding to the $C_2$-equivariant codiagonal map $\mu: \widetilde{\X\oplus\X} \to (\X,\triv)$ of grouplike $\bE_\infty$-spaces.
Then $\theta$ seen as map of grouplike $\bE_\infty$-spaces over $BC_2$ corresponds via straightening $$ \Grp_{\bE_\infty}(\caS)[C_2] \simeq \Grp_{\bE_\infty}(\caS[C_2]) \simeq \Grp_{\bE_\infty}(\caS_{/BC_2}) $$ to the fiber of the $C_2$-equivariant map of grouplike $\bE_\infty$-spaces $\mu.$
Thus by Remark \ref{hallos} we find that the grouplike $\bE_\infty$-space with $C_2$-action $(X,(-)^\dual)$ 
is the fiber of $\mu.$
	
	
\end{proof}
\begin{remark}
We can compose the functor $$\KR: (\Cmon(\Cat_\infty)_\preadd, (-)^\op)^{hC_2} \to \Calg(\Sp^{C_2})$$
of Remark \ref{remop} with the functor $$ \gamma: \Cmon(\Cat_\infty)^\rig_\preadd \to  (\Cmon(\Cat_\infty)_\preadd, (-)^\op)^{hC_2} , \caC \mapsto (\caC,(-)^\dual)$$
to obtain a functor $$ \Cmon(\Cat_\infty)^\rig_\preadd \to \Calg(\Sp^{C_2})$$
that assigns to a preadditive rigid symmetric monoidal $\infty$-category
$\caC$ a real $K$-theory $\bE_\infty$-$C_2$-spectrum $\KR(\caC,(-)^\dual).$

Moreover we can twist the duality with a $C_2$-action before applying real $K$-theory:
composing the functor $$\gamma[C_2]: \Cmon(\Cat_\infty)^\rig_\preadd[C_2] \to  (\Cmon(\Cat_\infty)_\preadd, (-)^\op)^{hC_2}[C_2]$$
with the twisting functor $$t: (\Cmon(\Cat_\infty)_\preadd, (-)^\op)^{hC_2}[C_2] \simeq (\Cmon(\Cat_\infty)_\preadd, (-)^\op \times \triv)^{h(C_2 \times C_2)} $$$$\to (\Cmon(\Cat_\infty)_\preadd,(-)^\op)^{hC_2}$$
and the functor $\KR$ gives a functor
$$ \Cmon(\Cat_\infty)^\rig_\preadd[C_2] \xrightarrow{t \circ \gamma[C_2]}(\Cmon(\Cat_\infty)_\preadd,(-)^\op)^{hC_2} \xrightarrow{\KR}\Calg(\Sp^{C_2}).$$


\end{remark}

\begin{remark}\label{rey}
Let $R \in \Calg(\Sp)$ be an $\bE_\infty$-ring spectrum.
Real $K$-theory $KR(\caP(R),(-)^\dual)$ only depends on the connective cover of $\R.$
Other constructions of the Real $K$-theory of $\caP(R),(-)^\dual$ can be found in 
\cite{real.wald}, other constructions of the Real $K$-theory of $\Perf(R),(-)^\dual $
can be found in \cite{calmès2021hermitian1}, \cite{calmès2021hermitian2}, \cite{calmès2021hermitian3}, \cite{real.wald} and \cite{spitzweck.gw}.
The hermitian K-theory of $\Perf(R),(-)^\dual$ of $\cite{spitzweck.gw}$ agrees with the hermitian K-theory of $\Perf(R),(-)^\dual$ of \cite{real.wald} and with the connective cover of the hermitian K-theory of $\Perf(R),(-)^\dual$ of \cite{calmès2021hermitian1}, \cite{calmès2021hermitian2}, \cite{calmès2021hermitian3} if one chooses the homotopy symmetric refinement by \cite[B.1.1. Proposition]{calmès2021hermitian2}.


	
\end{remark}

\begin{remark}\label{comp1}
	
In \cite{hebestreit2023stable}[Theorem B] the authors prove that for every commutative ring $R$
hermitian $K$-theory of $(\Perf(R), (-)^\dual)$ for the genuine symmetric Poincar\'e structure agrees with the group completion of the $\bE_\infty$-space of non-degenerate hermitian objects in $\caP(R), (-)^\dual.$
Applying the result to the hyperbolic $\infty$-category this also implies that the $K$-theory of $\Perf(R)$ defined via $Q$-construction or $S$-construction agrees with group completion of $\caP(R)^\simeq.$

It would be interesting to know if this result extends to real $K$-theory, i.e. that the genuine $C_2$-spectrum of real $K$-theory of $(\Perf(R), (-)^\dual)$ of \cite[Definition 4.5.1.]{calmès2021hermitian2} agrees with our construction of the genuine $C_2$-spectrum of real $K$-theory of $\caP(R), (-)^\dual.$
By the fundamental fibre square \cite[Corollary 4.4.14.]{calmès2021hermitian2}
this would imply that $L$-theory for the genuine symmetric Poincar\'e structure of \cite[Definition 4.4.4.]{calmès2021hermitian2} agrees with the geometric fixed points of our
genuine $C_2$-spectrum $KR(\caP(R), (-)^\dual).$
	
	
\end{remark}

\section{Linear dualities}
\label{hjtgr54rz}

In this section we "classify" dualities on a fixed symmetric monoidal $\infty$-category with duality $\caC$ that are compatible with the $\caC$-action and the duality.

\begin{notation}
For every symmetric monoidal $\infty$-category $\caC$ let $$B\caC \subset \Mod_{\caC}(\Cat_\infty)$$ be the full symmetric monoidal subcategory spanned by $\caC.$
\end{notation}

\begin{remark}
The $\infty$-category $\Mod_{\caC}(\Cat_\infty)$ carries a canonical closed action of
$\Cat_{\infty}$ and so is enriched in $\Cat_{\infty}$ \cite[Proposition 6.10.]{HEINE2023108941}
or  \cite[Example 3.23.]{heine2024bienriched}, what one calls an $(\infty,2)$-category. Hence the full subcategory $B\caC$ may be viewed as an
$(\infty,2)$-category as well.

\end{remark}

\begin{remark}
The space $(B\caC)^\simeq$ is connected and there is a canonical symmetric monoidal equivalence between $\caC$ and the symmetric monoidal $\infty$-category of endomorphisms of the tensor unit in $B\caC $.
In particular, for any grouplike $\bE_\infty$-space $\X$ the symmetric monoidal $\infty$-category $B\X$ is a grouplike $\bE_\infty$-space and there is a canonical equivalence $\X \simeq \Omega(B\X) $ of grouplike $\bE_\infty$-spaces.

\end{remark}

\begin{remark}\label{Funkt}
	
Every symmetric monoidal functor $\caC \to \caD$ gives rise to a symmetric monoidal functor $\caD \otimes_{\caC} (-): \Mod_{\caC}(\Cat_{\infty}) \to \Mod_{\caD}(\Cat_{\infty})$	that restricts to a symmetric monoidal functor $B\caC \to B\caD.$	
The latter is the one-object case of $\infty\Cat$-enriched Yoneda-extension 
(see \cite{HINICH2020107129} or \cite{heine2024equivalencemodelsinftycategoriesenriched} for enriched presheaves and \cite[Corollary 5.27.]{heine2024higher} for enriched Day-convolution).

\end{remark}

Next we equip $B\caC$ with a $C_2$-action when $\caC$ is a symmetric monoidal $\infty$-category with duality.


\begin{lemma}
\label{htr5te4}
Let $G$ be a group and $\caD \in \Cmon(\widehat{\Cat}_\infty)[\G]$ 
such that $\caD$ admits geometric realizations that are preserved by the tensor product. Let $A \in \Calg(\caD)^{h\G} \simeq \Calg(\caD^{h\G})$
(This equivalence holds since $\Calg$ preserves small limits).

\begin{enumerate}
\item There is a $\G$-action on the symmetric monoidal $\infty$-category $\Mod_A(\caD)$ and a symmetric monoidal equivalence: $$\Mod_A(\caD^{h\G}) \simeq \Mod_A(\caD)^{h\G}.$$
 
\item If $\caD \in \Cmon(\widehat{\Cat}_\infty)[\G]$ carries the trivial $\G$-action, 
the $\G$-action on $\Mod_A(\caD)$ induced by $A \in \Calg(\caD)^{h\G} \simeq \Calg(\caD)[\G]$ arises by applying the functor
$$\Calg(\caD) \to \Cmon(\widehat{\Cat}_\infty), A \mapsto \Mod_A(\caD).$$
\end{enumerate}
\end{lemma}

\begin{proof}In the following we use that a symmetric monoidal $\infty$-category with $\G$-action $\caB$, i.e. a functor $B \G \to \Cmon(\Cat_\infty)$ is classified by a $B \G$-family of $\infty$-operads $\caB^\otimes \to \Fin_*  \times B \G$ in the sense of \cite[Definition 2.3.2.10.]{lurie.HA}, where the fiber over the object of $B \G$ is a symmetric monoidal $\infty$-category.
By functoriality of $\Calg, \Mod$ the $B \G$-family of $\infty$-operads $\caD^\otimes \to \Fin_*  \times B \G$ whose fiber over the object of $B \G$
is a symmetric monoidal $\infty$-category, induces $B \G$-families of $\infty$-operads $\Calg(\caD)^\otimes \to \Fin_*  \times B \G$,
$\Mod(\caD)^\otimes \to \Fin_*  \times B \G$ whose fiber over the object of $B \G$
is a symmetric monoidal $\infty$-category.

There is an adjunction $B \G \times (-): \mathrm{Op}_\infty \rightleftarrows \mathrm{Op}^{B \G}_\infty: ((-)^{h \G})^\otimes$ between $\infty$-operads and $B \G$-families of
$\infty$-operads, which restricts to the adjunction
$\triv: \Cmon(\Cat_{\infty}) \rightleftarrows \Cmon(\Cat_{\infty})[B \G]: (-)^{h \G}$.
The right adjoint sends $\Calg(\caD)^\otimes \to \Fin_*  \times \B G, \Mod(\caD)^\otimes \to \Fin_*  \times B \G $
to the symmetric monoidal $\infty$-categories $$\Calg(\caD^{h\G})^\otimes \to \Fin_*, \Mod(\caD^{h\G})^\otimes \to \Fin_*$$ that classify $\Calg(\caD)^{h\G} \simeq \Calg(\caD^{h\G}), \Mod(\caD)^{h\G} \simeq \Mod(\caD^{h\G})$ with the induced symmetric monoidal structures.

1.: The symmetric monoidal $\infty$-category $\Calg(\caD^{h\G})^\otimes \to \Fin_*$ is cocartesian \cite[Proposition 3.2.4.7.]{lurie.HA}. So by \cite[Proposition 2.4.3.9.]{lurie.HA} the 
object $A \in \Calg(\caD^{h\G})$ uniquely lifts to a commutative algebra in $\Calg(\caD^{h\G})$, i.e. a lax symmetric monoidal functor $\Fin_* \to \Calg(\caD^{h\G})^\otimes $. The latter corresponds by adjointness to a map of $B \G$-families of $\infty$-operads $\Fin_* \times B \G \to \Calg(\caD)^\otimes.$
We define a $B \G$-family of $\infty$-operads by the pullback of $B \G$-families of $\infty$-operads:
$$\xymatrix{\Mod_A(\caD)^\otimes \ar[r] \ar[d] & \Mod(\caD)^\otimes \ar[d] \\
\Fin_* \times B \G \ar[r]^A & \Calg(\caD)^\otimes}.$$

The right vertical functor is a cocartesian fibration by \cite[Theorem 4.5.3.1.]{lurie.HA}, where we use that $\caD$ admits geometric realizations that are preserved by the tensor product.
This implies that $\Mod_A(\caD)^\otimes \to \Fin_* \times B \G $ is a cocartesian fibration so that its fiber over the object of $B \G$ is a symmetric monoidal $\infty$-category. Applying the right adjoint $((-)^{h\G})^\otimes$ we obtain a pullback square
of $\infty$-operads $$\xymatrix{(\Mod_A(\caD)^{h\G})^\otimes \ar[r] \ar[d] & \Mod(\caD^{h\G})^\otimes \ar[d] \\
\Fin_* \ar[r]^A & \Calg(\caD^{h\G})^\otimes }$$
inducing an equivalence of symmetric monoidal $\infty$-categories:
$$(\Mod_A(\caD)^{h\G})^\otimes \simeq\Mod_A(\caD^{h\G})^\otimes.$$

2.: The functor
$\Calg(\caD) \to \Cmon(\widehat{\Cat}_\infty), A \mapsto \Mod_A(\caD)$
sends the object $A \in \Calg(\caD)[\G]$ to the symmetric monoidal $\infty$-category with $\G$-action classified by the pullback of $\Mod(\caD)^\otimes \to \Calg(\caD)^\otimes$ along $A: \Fin_* \times B \G \to \Calg(\caD)^\otimes$.

If $\caD^\otimes \to \Fin_*$ carries the trivial $\G$-action,
the classifying $B \G$-family of $\infty$-operads is $\caB^\otimes:= \caD^\otimes \times B \G \to \Fin_* \times B \G$ so that $\Mod(\caB)^\otimes \simeq \Mod(\caD)^\otimes \times B \G$ and similarly for $\Calg$.
Hence the $\G$-family of $\infty$-operads $\Mod_A(\caD)^\otimes \to \Fin_* \times B \G$ is the pullback of the cocartesian fibration $\Mod(\caD)^\otimes \to \Calg(\caD)^\otimes$ along $A: \Fin_* \times B \G \to \Calg(\caD)^\otimes.$ 


\end{proof}
\begin{notation}
Let $(\caC,D)$ be a symmetric monoidal $\infty$-category with duality. 
By Proposition \ref{htr5te4} the symmetric monoidal $\infty$-category $\Mod_{\caC}(\Cat_\infty) $
carries an induced $C_2$-action, which we denote by $(\Mod_{\caC}(\Cat_\infty), D).$
\end{notation}
By Proposition \ref{htr5te4} there is a canonical equivalence $$(\Mod_{\caC}(\Cat_\infty),D)^{hC_2} \simeq \Mod_{\caC}((\Cat_\infty,(-)^\op)^{hC_2}).$$
The $C_2$-action $(\Mod_{\caC}(\Cat_\infty), D)$ restricts to a $C_2$-action $(B\caC,D).$

\begin{remark}\label{osol}
If $(\X,\tau)$ is a grouplike $\bE_\infty$-space with $C_2$-action, then $(B\X, \tau)$ is a grouplike $\bE_\infty$-space with $C_2$-action. By Proposition \ref{htr5te4} (2) the $C_2$-action $(B\X,\tau)$ arises by applying to $(\X,\tau)$ the functor
$B: \Grp_{\bE_\infty}(\caS)\to \Grp_{\bE_\infty}(\caS)$. 
\end{remark}
\begin{definition}
Let $(\caC,D)$ be a symmetric monoidal $\infty$-category with duality. 	
A $\caC$-linear duality is a homotopy $C_2$-fixed point of the $C_2$-action on 
$\Mod_{\caC}(\Cat_\infty)$.
\end{definition}

\begin{remark}A $\caC$-linear duality $(\caM, E)$ consists of 
an $\infty$-category with duality $(\caM,E)$ and a $\caC$-action on $\caM$,
which is compatible with the duality in the following sense, and coherence data:
the following square commutes:
$$\xymatrix{\caC \times \caM \ar[r] \ar[d]^{D\times E} & \caM \ar[d]^E \\
\caC \times \caM  \ar[r] & \caM.}$$
	
\end{remark}

\begin{notation}
Let $(\caC,D)$ be a symmetric monoidal $\infty$-category with duality. 	
The symmetric monoidal $\infty$-category of 
$\caC$-linear dualities on $\caC$ is $(B\caC,D)^{hC_2}$.
\end{notation}

\begin{remark}The symmetric monoidal $\infty$-category $(B\caC,D)^{hC_2}$ is the full symmetric monoidal subcategory of $(\Mod_{\caC}(\Cat_\infty),D)^{hC_2} $ spanned by the $\caC$-linear dualities whose image in $\Mod_{\caC}(\Cat_\infty)$
is equivalent to $\caC.$
\end{remark}

\begin{notation}
Let $\caC$ be a symmetric monoidal $\infty$-category.

\begin{itemize}
\item Let $\Pic(\caC)\subset \caC^\simeq$ be the full grouplike $\bE_\infty$-space of tensor-invertible objects.

\item Let $\pic(\caC)$ be the corresponding spectrum to $\Pic(\caC)$, the infinite delooping.

\end{itemize}

\end{notation}

\begin{remark}
Let $\Grp_{\bE_\infty}(\caS) \subset \Cmon(\caS)$ be the full subcategory of grouplike $\bE_\infty$-spaces. The $C_2$-equivariant embedding $(\caS,\triv)\subset (\Cat_\infty,(-)^\op)$
induces a $C_2$-equivariant embedding
$(\Grp_{\bE_\infty}(\caS),\triv) \subset (\Cmon(\Cat_\infty),\op)$.
The functor $$\Pic: \Cmon(\Cat_\infty) \to \Grp_{\bE_\infty}(\caS)$$ is right adjoint to the embedding
$\Grp_{\bE_\infty}(\caS) \subset\Cmon(\Cat_\infty)$ and so refines to a $C_2$-equivariant functor $$ (\Cmon(\Cat_\infty),\op) \to (\Grp_{\bE_\infty}(\caS),\triv),$$
which we can prolong to a $C_2$-equivariant functor $$ (\Cmon(\Cat_\infty),\op) \to (\Grp_{\bE_\infty}(\caS),\triv)\hookrightarrow (\Sp,\triv) .$$
\end{remark}

The next lemma tells us that the space of $\caC$-linear dualities on $\caC$ is the infinite loop space of the spectrum $(\pic(\caC)[1],D)^{hC_2}:$

\begin{lemma}\label{ppp}
Let $(\caC,D)$ be a symmetric monoidal $\infty$-category with duality.
There are canonical equivalences of grouplike $\bE_\infty$-spaces:
$$ ((B\caC,D)^{hC_2})^\simeq \simeq \Omega^\infty((\pic(\caC)[1],D)^{hC_2}).$$
$$\{\caC\}\times_{B\caC} (B\caC,D)^{hC_2} \simeq \Omega^\infty(0 \times_{\pic(\caC)[1]} (\pic(\caC)[1],D)^{hC_2}).$$
 
\end{lemma}

\begin{proof}
By Remark \ref{Funkt} and \ref{osol} the inclusion $(\Pic(\caC),D) \subset (\caC,D)$ of symmetric monoidal $\infty$-categories with duality induces an equivalence $(B\Pic(\caC),D) \simeq ((B\caC)^\simeq,D) $ of grouplike $\bE_\infty$-spaces with $C_2$-action and so an equivalence of grouplike $\bE_\infty$-spaces $$ ((B\caC,D)^{hC_2})^\simeq\simeq ((B\caC)^\simeq,D)^{hC_2} \simeq (B\Pic(\caC),D)^{hC_2} \simeq \Omega^\infty((\pic(\caC)[1],D)^{hC_2}).$$

We obtain an equivalence of grouplike $\bE_\infty$-spaces 
$$\{\caC\}\times_{B\caC} (B\caC,D)^{hC_2} \simeq \{\Pic(\caC)\}\times_{B\Pic(\caC)} (B\Pic(\caC),D)^{hC_2} \simeq $$$$ \Omega^\infty(\{0 \times_{\pic(\caC)[1]} (\pic(\caC)[1],D)^{hC_2}).$$

\end{proof}





The next notation is motivated by Lemma \ref{aga}:

\begin{notation}

Let $\caD$ be an additive $\infty$-category and $(\X,\tau)\in \caD[C_2]$.


\begin{itemize}
\item Let $(\X, (-)^\dual \circ \tau)$ be the diagonal action of the fiber of the $C_2 \times C_2$-equivariant codiagonal morphism
$$\widetilde{\X \oplus \X} \simeq \widetilde{(\X,\tau) \oplus (\X, \tau)} \xrightarrow{\mu} (\X, \tau).$$
\item Let $(\X,(-)_\dual \circ \tau)$ be the diagonal action of the cofiber of the $C_2 \times C_2$-equivariant diagonal morphism
$$ (\X,\tau) \xrightarrow{\delta} \widetilde{(\X,\tau) \oplus(\X,\tau)} \simeq \widetilde{\X \oplus \X}.$$
\end{itemize}


\end{notation}

\begin{remark}
	
Note that $$	(\X, (-)^\dual \circ \tau) \simeq (\X, \tau \circ (-)^\dual),
(\X,(-)_\dual \circ \tau) \simeq (\X,\tau \circ (-)_\dual)$$
by definition of a $C_2 \times C_2$-action.
	
\end{remark}

\begin{remark}\label{blablas}
Every additive $\infty$-category $\caD$ is canonically enriched in the $\infty$-category $\Sp_{\geq0}$ of connective spectra.
In particular, for every $\ell \geq 0$ and $\X \in \caD$ the tensor $S^{\oplus \ell} \otimes \X$ of the $\ell$-fold sum of the shere spectrum $S$ and $\X$ is defined, and is given by $\X^{\oplus \ell}$.
Moreover there is a canonical equivalence $(S,(-)^\dual) \otimes (\X,\triv) \simeq (\X,(-)^\dual)$ in $\caD[C_2]$
lying over the identity and dually, an equivalence $(S,(-)_\dual) \otimes (\X,\triv) \simeq (\X,(-)_\dual)$ in $\caD[C_2]$ lying over the identity.

\end{remark}

\begin{notation}
We write $(S^1,\tau) \in \caS_*[C_2]$ for the image under the suspension functor
$\Sigma: \caS \to \caS_*$ of the $C_2$-action on $S^0$ permuting the two points.
	
\end{notation}
\begin{remark}\label{kinda}
Let $S^0$ carry the $C_2$-action permuting the two points.
Let $(-)_+: \caS \to \caS_*$ be the functor adding a disjoint base point and
$\Sigma: \caS \to \caS_*$ the suspension. 
There is a cofiber sequence of pointed spaces with $C_2$-action
$ (S^0)_+ \to (\ast)_+ \to (S^1,\tau)$.
Let $\caD$ be an $\infty$-category that admits loops and a zero object.
Then $\caD$ is cotensored over finite pointed spaces and the latter cofiber sequence yields for every $(\X,\tau) \in \caD[C_2]$ a fiber sequence in $\caD[C_2]$ after taking the diagonal action: $$(\X,\tau)^{(S^1,\tau)} \to (\X,\tau)^{(\ast)_+} \simeq (\X,\tau) \xrightarrow{\delta} (\X,\tau)^{(S^0)_+} \simeq \widetilde{(\X,\tau) \times (\X,\tau)} \simeq \widetilde{\X \times \X}.$$

Thus if $\caD$ is stable, there is a canonical equivalence in $\caD[C_2]:$
$$(\X,\tau \circ (-)_\dual) \simeq (\X,\tau)^{(S^1,\tau)}[1].$$	

\end{remark}

\begin{lemma}\label{oolp}
	
The following $C_2$-actions on $S^2$ in $\caS_*$ are equivalent:
	
\begin{enumerate}
\item The $C_2$-action $(S^1, \tau) \smash (S^1, \triv)$. 
		
\item 
The cofiber of the $C_2$-equivariant codiagonal map $\widetilde{S^1 \vee S^1} \to (S^1, \triv)$.
		
\item The $C_2$-action $\widetilde{S^1 \smash S^1}$ by switching the factors. 
		
		
\end{enumerate}	
	
\end{lemma}

\begin{proof}
The pointed $C_2$-space $(S^1, \tau)$ can be modeled by the subset of $\mathbb{R}^2$ of distance 1 of the origin equipped with the $C_2$-action reflecting on the $x$-axis.	
The pointed $C_2$-space $(S^1, \tau) \smash (S^1, \triv)$ is then modeled by the subset of $\mathbb{R}^3$ of distance 1 of the origin equipped with the $C_2$-action reflecting on the $x$-axis.	

The cofiber of the $C_2$-equivariant codiagonal map $\widetilde{S^1 \vee S^1} \to (S^1, \triv)$ is the suspension of the cofiber of the $C_2$-equivariant codiagonal map $\widetilde{S^0 \vee S^0} \to (S^0, \triv)$.
The cofiber of the $C_2$-equivariant codiagonal map $\widetilde{S^0 \vee S^0} \to (S^0, \triv)$ is modeled by the pushout of the map $\{ 0,1,2 \} \to \{0,1\}, 0 \mapsto 0, 2 \mapsto 0, 1 \mapsto 1$ along the inclusion $\{ 0,1,2 \} \subset [0,2]$ into the interval,
which is the subset of $\mathbb{R}^2$ of distance 1 of the origin equipped with the $C_2$-action reflecting on the $x$-axis.

The pointed $C_2$-space $\widetilde{S^1 \smash S^1}$ is modeled by the one-point compactification of the $C_2$-space $\widetilde{\mathbb{R} \times \mathbb{R}}$.
The $C_2$-space $\widetilde{\mathbb{R} \times \mathbb{R}}$ is $C_2$-equivariantly isomorphic via rotation by 45 degree to $\mathbb{R}^2 $ equipped with reflection at the $x$-axis.
Hence the pointed $C_2$-space $\widetilde{S^1 \smash S^1}$ is modeled by the subset of $\mathbb{R}^2$ of distance 1 of the origin equipped with the $C_2$-action reflecting on the $x$-axis.	

\end{proof}

\begin{corollary}\label{sosol}
There is a canonical equivalence of spectra with $C_2$-action lying over the identity:
$$ (\widetilde{S[1] \smash S[1]})[-2] \simeq (S, (-)^\dual).$$	
	
\end{corollary}
\begin{corollary}
There is a canonical $C_2$-equivariant equivalence of spectra with $C_2$-action lying over the identity:
$$ (S,(-)^\dual\circ (-)_\dual)\simeq (S,(-)_\dual\circ (-)^\dual)\simeq(S, \triv).$$	
	
\end{corollary}

\begin{proof}
By Remark \ref{kinda} the $C_2$-spectrum $(S,(-)^\dual \circ (-)_\dual) \simeq (S,(-)_\dual \circ (-)^\dual)$ is the internal hom in $\Sp_{\geq0}[C_2]$ from $(\Sigma^\infty(S^1), \tau)$ to $(S, (-)^\dual)[1].$
Lemma \ref{oolp} implies that there is a canonical equivalence of spectra with $C_2$-action $(\Sigma^\infty(S^1), \tau) \simeq (S, (-)^\dual)[1]$ so that the latter internal hom in $\Sp_{\geq0}[C_2]$ is equivalent to the internal hom in $\Sp_{\geq0}[C_2]$ from $(S,\triv)$ to $(S,\triv),$ which is $(S,\triv).$

\end{proof}

Remark \ref{blablas} gives the following corollary:
\begin{corollary}\label{kui}
Let $\caD$ be an additive $\infty$-category and $\X\in \caD$ an object. 
There is a canonical equivalence $(\X,(-)^\dual \circ (-)_\dual) \simeq (\X,(-)_\dual \circ (-)^\dual) \simeq (\X,\triv)$ in $\caD[C_2].$

\end{corollary}

We  prove the following proposition:

\begin{proposition}\label{htewett}\label{htewegf}
Let $(\caC,D)$ be a symmetric monoidal $\infty$-category with duality.
There is an exact sequence of spectra:
$$\pic(\caC) \to (\pic(\caC),D \circ (-)_\dual)^{h C_2} \to (\pic(\caC)[1],D)^{hC_2} \to \pic(\caC)[1]. $$

	
\end{proposition}
\begin{proof}[Proof of Proposition \ref{htewegf}]
There is an exact sequence of spectra with $C_2$-action, where $\mu$ is the $C_2$-equivariant codiagonal map:
$$\widetilde{(\pic(\caC) \times (\pic(\caC)}  \xrightarrow{\mu} (\pic(\caC), D \circ (-)_\dual) \to (\pic(\caC)[1], D \circ (-)_\dual \circ (-)^\dual) $$$$ \simeq (\pic(\caC)[1], D),$$
where the last equivalence is by Corollary \ref{kui}.
Taking homotopy $C_2$-fixed points gives an exact sequence of spectra
$$\pic(\caC) \to (\pic(\caC),D \circ (-)_\dual)^{h C_2}\to (\pic(\caC)[1],D)^{hC_2}.$$
	
	
\end{proof}
We obtain the following corollary:
\begin{corollary}\label{hoy}Let $\caC$ be a rigid symmetric monoidal $\infty$-category.
There is an exact sequence of spectra:
$$\pic(\caC) \to \pic(\caC)^{B C_2} \to (\pic(\caC)[1],(-)^\dual)^{hC_2} \to \pic(\caC)[1]. $$

	
\end{corollary}

\begin{proof}
We apply Proposition \ref{htewegf} to $D=(-)^\dual.$
By Corollary \ref{kui} there is a canonical equivalence $$(\pic(\caC),(-)^\dual \circ (-)_\dual)^{h C_2} \simeq \pic(\caC)^{B C_2}.$$
	
\end{proof}

\begin{construction}\label{ohas}
Let $(\caC,D)$ be a preadditive symmetric monoidal $\infty$-category with duality. By Proposition \ref{htewett} there is a canonical equivalence $$(\Pic(\caC),D \circ (-)_\dual)^{h C_2}  \simeq \{\caC\} \times_{B\Pic(\caC)} (B\Pic(\caC), D)^{hC_2}.$$
The $C_2$-equivariant symmetric monoidal embedding $(B\caC,D) \subset (\Mod_\caC(\Cat_\infty),D) $ induces a $C_2$-equivariant symmetric monoidal embedding $$(B\caC,D) \subset (\Mod_\caC(\Cat^\preadd_\infty),D).$$

Let $\KR_\caC$ be the composition of lax symmetric monoidal functors
$$(\Pic(\caC),D \circ (-)_\dual)^{h C_2} \xrightarrow{\alpha} (B\Pic(\caC),D)^{hC_2} \subset (B\caC,D)^{hC_2}\subset (\Mod_\caC(\Cat^\preadd_\infty),D)^{hC_2}$$$$\xrightarrow{\text{forget}} (\Cat_\infty^\preadd,(-)^\op)^{hC_2} \xrightarrow{\KR}  \Sp^{C_2}.$$

\end{construction}

\begin{example}
The embedding $\caP(S) \subset \Perf(S)$ of symmetric monoidal $\infty$-categories
gives rise to an embedding $$\Pic(\caP(S))^{BC_2} \subset \Pic(\Perf(S))^{BC_2} = \Pic(S)^{BC_2}.$$
The object $(S, (-)^\dual) \in \Pic(S)[C_2]= \Pic(S)^{BC_2}$
lands in the full subspace $\Pic(\caP(S))^{BC_2}$ since $S$ lies in $\caP(S).$
The functor 
$$\Pic(\caP(S))^{B C_2} \to \Pic(\caP(R))^{B C_2} \xrightarrow{\KR_{\caP(R)}} \Sp^{C_2}$$
sends $(S, (-)^\dual)$ to a version of symplectic K-theory.
	
\end{example}

\begin{notation}Let $(\caC,D)$ be a symmetric monoidal $\infty$-category with duality.
The $C_2$-equivariant codiagonal map
$$\mu: \widetilde{(\pic(\caC), D\circ (-)_\dual) \times (\pic(\caC), D\circ (-)_\dual)} \to (\pic(\caC), D\circ (-)_\dual)$$
induces on homotopy $C_2$-fixed points a map
$$\pic(\caC) \to (\pic(\caC),D\circ (-)_\dual)^{h C_2}.$$	
The latter induces on infinite loops a map of grouplike $\bE_\infty$-spaces:
$$\chi: \Pic(\caC) \to (\Pic(\caC),D\circ (-)_\dual)^{h C_2}$$
that sends $M$ to $\mu(M,D(M)).$

\end{notation}

The defining factorization of $\KR_\caC$ gives the following corollary:

\begin{corollary}
Let $(\caC,D)$ be a preadditive symmetric monoidal $\infty$-category with duality,
$\L \in (\Pic(\caC),D\circ (-)_\dual)^{h C_2}$ and  $\M \in \Pic(\caC)$. There is an equivalence
$$\KR_\caC(\L) \simeq \KR_\caC(\L \otimes \chi(M))$$
of $\KR(\caC)$-module spectra. 
\end{corollary}

\begin{proof}Let $\alpha: (\Pic(\caC),D\circ (-)_\dual)^{h C_2} \to (B\Pic(\caC),D)^{hC_2} $
be the map of  grouplike $\bE_\infty$-spaces of Notation \ref{ohas}.
By the defining factorization of $\KR_\caC$ it is enough to see that
$\alpha(\L)$ is equivalent to $\alpha(\L\otimes\chi(M))= \alpha(L)\otimes_{\Pic(\caC)}\alpha(\chi(M)).$
This follows since $\alpha(\chi(M))\simeq \Pic(\caC)$ because the composition 
$$\Pic(\caC) \xrightarrow{\chi} (\Pic(\caC),D\circ (-)_\dual)^{h C_2} \xrightarrow{\alpha} (B\Pic(\caC),D)^{hC_2} $$
of maps of grouplike $\bE_\infty$-spaces is the zero map by Proposition \ref{htewegf}.

\end{proof}

In the following we apply Corollary \ref{hoy} to make the following computation:

\begin{proposition}\label{comp}

\begin{enumerate}
\item We have 	
$$\pi_\ell(\pic(\integers)[1]^{hC_2})= \begin{cases} \integers/4\integers, \ell=0 \\
\integers/2\integers, \ell=1,2, \\
0, \ell> 2.\\	 
\end{cases}$$
	
\item We have $$\pi_\ell(\pic(S[\frac{1}{2}])[1]^{hC_2} )= \begin{cases} \integers/4\integers, \ell=0 \\
\integers/2\integers \oplus \integers, \ell=1,2.\\	 
\end{cases}$$
\end{enumerate}
\end{proposition}

We need the following lemma:

\begin{lemma}\label{aros}
\begin{enumerate}
\item We have 	
$$\pi_\ell(\pic(\integers)^{BC_2})= \begin{cases} \integers/2\integers \oplus \integers, \ell=0 \\
\integers/2\integers, \ell=1, \\
0, \ell> 1.\\	 
\end{cases}$$
	
\item For $\ell=0,1$ we have $$\pi_\ell(\pic(S[\frac{1}{2}])^{BC_2})= \integers/2\integers \oplus \integers.$$
\end{enumerate}	
	
\end{lemma}

\begin{proof}In the following we use the well-known facts about group cohomology: $$\pi_\ell(H(\integers/2\integers)^{BC_2})=\begin{cases}\integers/2\integers, \ell \leq0 \\ 0, \ell > 0,
\end{cases} \pi_\ell(H(\integers)^{BC_2})=\begin{cases}0, \ell =-1,\\ \integers, \ell = 0 \\ 0, \ell > 0
\end{cases}.$$

(1): 
Since $\pic(\integers)$ is $1 $-truncated, also $\pic(\integers)^{BC_2}$ is $1 $-truncated.
So it remains to compute $\pi_\ell(\pic(\integers)^{BC_2})$ for $\ell=0,1.$
Since $\pi_0(\pic(\integers))=\integers$, $\pi_1(\pic(\integers))=\integers/2 \integers$
and $\pi_\ell(\pic(\integers))=0$ for every $\ell > 1$, the canonical map $\pic(\integers) \to \pic(\integers)_{\leq1} $ is an equivalence and $ \pic(\integers)_{\leq0}=H(\integers) $ and there is a fiber sequence $$ H(\integers/2\integers)[1] \to \pic(\integers) \to H(\integers).$$	
This fiber sequence induces a fiber sequence 
$$ H(\integers/2\integers)^{BC_2}[1] \to \pic(\integers)^{BC_2} \to H(\integers)^{BC_2},$$	
which yields on homotopy groups for $\ell \geq 0$ an exact sequence
$$\pi_{\ell+1}(H(\integers)^{BC_2})\to \pi_{\ell-1}(H(\integers/2\integers)^{BC_2}) \to \pi_\ell(\pic(\integers)^{BC_2}) $$$$\to \pi_\ell(H(\integers)^{BC_2}) \to \pi_{\ell-2}(H(\integers/2\integers)^{BC_2}).$$
The map $\integers = \pi_0(\pic(\integers)) \xrightarrow{\pi_0(\delta)} \pi_0(\pic(\integers)^{BC_2}) \xrightarrow{\alpha} \pi_0(H(\integers)^{BC_2})=\integers$ is the identity, where $\delta$ is the diagonal map. Thus $\alpha$ admits a canonical section.

For $\ell=0$ we obtain an exact sequence
$$0\xrightarrow{0} \integers/2\integers \to \pi_0(\pic(\integers)^{BC_2})\xrightarrow{\alpha} \integers \xrightarrow{0} \integers/2\integers$$	
and so the exact sequence $$0\xrightarrow{0} \integers/2\integers \to \pi_0(\pic(\integers)^{BC_2})\xrightarrow{\alpha} \integers\to 0 $$
is split via $\pi_0(\delta).$ So $\pi_0(\pic(\integers)^{BC_2})=\integers/2\integers\oplus \integers.$
For $\ell =1 $ we obtain an exact sequence
$$0 \to \integers/2\integers \to \pi_1(\pic(\integers)^{BC_2})\to 0 \to \integers/2\integers$$		
so that $\pi_1(\pic(\integers)^{BC_2}) \cong  \integers/2\integers.$

(2): Recall that  $$ \pi_1(\pic(S[1/2]))= \pi_0(S[1/2])^\times=\integers[1/2]^\times\cong \integers/2	\integers \oplus \integers.$$
Moreover note that the map $\pic(S[1/2])_{\geq 2}^{BC_2}\to \pic(S[1/2])_{\geq 2}$ evaluating at the unique object of $BC_2$ is an equivalence since all homotopy groups of $ \pic(S[1/2])_{\geq 2}$ are uniquely divisible by 2.
So the exact sequence	
$$\pic(S[1/2])_{\geq 2}	\to \pic(S[1/2])_{\geq 1} \to H(\pi_1(\pic(S[1/2]))[1]$$
induces an exact sequence 	
$$\pic(S[1/2])_{\geq 2}	\to (\pic(S[1/2])_{\geq 1})^{BC_2} \to H(\pi_1(\pic(S[1/2]))^{BC_2}[1]\simeq $$$$H(\integers/2	\integers \oplus \integers)^{BC_2}[1]$$
The latter gives rise to a long exact sequence
$$0=\pi_1(\pic(S[1/2])_{\geq 2}) \to \pi_1((\pic(S[1/2])_{\geq 1})^{BC_2}) \to \pi_1(H(\integers/2	\integers \oplus \integers)^{BC_2}[1]) \to $$
$$0=\pi_0(\pic(S[1/2])_{\geq 2}) \to \pi_0((\pic(S[1/2])_{\geq 1})^{BC_2}) \to \pi_0(H(\integers/2	\integers \oplus \integers)^{BC_2}[1]) \to 0.$$
Thus $$\pi_1((\pic(S[1/2])_{\geq 1})^{BC_2}) \cong \pi_0(H(\integers/2	\integers \oplus \integers)^{BC_2})=\integers/2	\integers \oplus \integers,$$$$
\pi_0((\pic(S[1/2])_{\geq 1})^{BC_2}) \cong \pi_{-1}(H(\integers/2	\integers \oplus \integers)^{BC_2})=\integers/2	\integers.$$
The long exact sequence	
$$ \pic(S[1/2])_{\geq 1} \to \pic(S[1/2])\to H(\integers)$$ gives rise to an exact sequence	
$$ (\pic(S[1/2])_{\geq 1})^{BC_2}\to  \pic(S[1/2])^{BC_2} \to H(\integers)^{BC_2},$$ 
which gives rise to a long exact sequence
$$0\to \pi_1((\pic(S[1/2])_{\geq 1})^{BC_2}) \to \pi_1(\pic(S[1/2])^{BC_2}) \to \pi_1(H(\integers)^{BC_2}) \to $$
$$\pi_0((\pic(S[1/2])_{\geq 1})^{BC_2}) \to \pi_0(\pic(S[1/2])^{BC_2}) \to \pi_0(H( \integers)^{BC_2}) \to 0.$$
The latter identifies with the long exact sequence
$$0\to\pi_1((\pic(S[1/2])_{\geq 1})^{BC_2}) \to \pi_1(\pic(S[1/2])^{BC_2}) \to 0 \to $$
$$\pi_0((\pic(S[1/2])_{\geq 1})^{BC_2}) \to \pi_0(\pic(S[1/2])^{BC_2}) \to \integers \to 0.$$
Hence we obtain $$\pi_1(\pic(S[1/2])^{BC_2}) \cong \pi_1((\pic(S[1/2])_{\geq 1})^{BC_2}) \cong \integers/2	\integers \oplus \integers,$$
$$\pi_0(\pic(S[1/2])^{BC_2}) \cong\integers/2	\integers \oplus \integers.$$

\end{proof}

\begin{proof}[Proof of Proposition \ref{comp}]

(1): By Corollary \ref{hoy} there is an exact sequence $$\pic(\integers) \to \pic(\integers)^{BC_2}\to \pic(\integers)[1]^{hC_2},$$ which gives rise to a long exact sequence $$0 \to \pi_2(\pic(\integers)[1]^{hC_2}) \to  \pi_1(\pic(\integers)) \to  \pi_1(\pic(\integers)^{BC_2})\to \pi_1(\pic(\integers)[1]^{hC_2}) $$$$\to  \pi_0(\pic(\integers)) \to  \pi_0(\pic(\integers)^{BC_2}) \to  \pi_0(\pic(\integers)[1]^{hC_2}) \to 0.$$
By Lemma \ref{aros} the latter identifies with the long exact sequence
$$0 \to \pi_2(\pic(\integers)[1]^{hC_2}) \to \integers/2 \integers \xrightarrow{0}  \integers/2 \integers \to \pi_1(\pic(\integers)[1]^{hC_2}) $$$$\to \integers \xrightarrow{(2,1)} \integers \oplus \integers/2 \integers  \to \pi_0(\pic(\integers)[1]^{hC_2}) \to 0.$$
Since the map $(2,1): \integers \to \integers \oplus \integers/2\integers$ is injective,
we obtain that $$\pi_0(\pic(\integers)[1]^{hC_2})=\integers \oplus \integers/2 \integers/(2,1)\integers\cong \integers/4\integers, $$
$$\pi_1(\pic(\integers)[1]^{hC_2})= \pi_2(\pic(\integers)[1]^{hC_2})=\integers/2 \integers.$$

(2): By Corollary \ref{hoy} there is an exact sequence $$\pic(S[1/2]) \to \pic(S[1/2])^{BC_2}\to \pic(S[1/2])[1]^{hC_2},$$ which gives rise to a long exact sequence $$0 \to \pi_2(\pic(S[1/2])[1]^{hC_2}) \to  \pi_1(\pic(S[1/2])) \to  \pi_1(\pic(S[1/2])^{BC_2})\to$$$$ \pi_1(\pic(S[1/2])[1]^{hC_2}) \to  \pi_0(\pic(S[1/2])) \to $$$$ \pi_0(\pic(S[1/2])^{BC_2}) \to  \pi_0(\pic(S[1/2])[1]^{hC_2}) \to 0.$$
By Lemma \ref{aros} the latter identifies with the long exact sequence
$$0 \to \pi_2(\pic(S[1/2])[1]^{hC_2}) \to \integers \oplus \integers/2 \integers \xrightarrow{0}  \integers \oplus \integers/2 \integers \to \pi_1(\pic(S[1/2])[1]^{hC_2}) $$$$\to \integers \xrightarrow{(2,1)} \integers \oplus \integers/2 \integers  \to \pi_0(\pic(S[1/2])[1]^{hC_2}) \to 0.$$
Since the map $(2,1): \integers \to \integers \oplus \integers/2\integers$ is injective,
we obtain that $$\pi_0(\pic(S[1/2])[1]^{hC_2})=\integers \oplus \integers/2 \integers/(2,1)\integers\cong \integers/4\integers, $$
$$\pi_1(\pic(S[1/2])[1]^{hC_2})= \integers \oplus \integers/2 \integers.$$
$$\pi_2(\pic(S[1/2])[1]^{hC_2})= \integers \oplus \integers/2 \integers.$$

\end{proof}


	

\section{Dualities on the $\infty$-category of spectra}\label{j5r5z5z}

In the following we prove that $\pi_0(\pic(S)^{BC_2})$ contains $ \integers \oplus \integers_2, $ where $\integers_2$ is the ring of 2-adic numbers.
By Lin's Theorem \cite[Theorem 1.1]{lin} there is an isomorphism 
$ \pi_0(S^{BC_2}) \cong \integers \oplus \integers_2$
and we prove that the canonical map of spectra $S \to \pic(S)$ that sends the
unit to $S[1]$ induces an injection (Proposition \ref{hteetr5}):
$$\pi_0(S^{BC_2}) \to \pi_0(\pic(S)^{BC_2}).$$

\begin{remark}\label{split}
Let $\caD$ be a stable $\infty$-category that admits small limits and $\X \in \caD.$	
Choosing a base point in the space $B C_2$,
the retraction $* \to B C_2 \to *$ of spaces induces a retraction
$\X \to \X^{B C_2} \to \X$ in $\caD.$
Thus the cofiber sequence of pointed spaces $(*)_+ \to (BC_2)_+ \to BC_2$ induces via the cotensoring of $\caS_*$ on $\caD$ a split exact sequence $\X^{BC_2}_* \to \X^{B C_2} \to \X$, where the first object is the pointed cotensor. 
In particular, the sequence $$(\X,(-)_\dual)[-1] \to (\X,\triv) \to \widetilde{\X\times\X}$$ in $\caD[C_2], $ where the latter map is the diagonal, induces a split exact sequence $$(\X,(-)_\dual)^{hC_2}[-1] \to \X^{B C_2} \to \X$$ in $\caD$,
which identifies $(\X,(-)_\dual)^{hC_2}[-1] \simeq\X^{BC_2}_*.$
\end{remark}

\begin{remark}\label{remu}
By \cite[Theorem 1.1]{lin} there is an isomorphism 
$  \pi_0(S^{BC_2}) \cong \integers \oplus \integers_2$.
The split fiber sequence $$ (S,(-)_\dual)^{hC_2}[-1] \to S^{BC_2} \to S$$
of Remark \ref{split} induces a short exact sequence $$0 \to \pi_0( (S,(-)_\dual)^{hC_2}[-1]) \to \pi_0(S^{BC_2})\cong \integers \oplus \integers_2 \to \pi_0(S) \simeq \integers \to 0,$$ where the last map identifies with projection.
Thus $ \pi_0( (S,(-)_\dual)^{hC_2}[-1]) \cong \integers_2.$
\end{remark}

\begin{notation}
Let $X$ be a spectrum. The $C_2$-equivariant codiagonal map $\mu: \widetilde {X \oplus X} \to X$ induces on homotopy $C_2$-fixed points a map of spectra $$ \tau_X: X \to X^{B C_2} \simeq X \oplus (X,(-)_\dual)^{hC_2}[-1], \Y \mapsto \mu(\Y,\Y).$$
We write $\theta_X $ for the map
$ X \to X^{B C_2} \to (X,(-)_\dual)^{hC_2}[-1]$.
	
\end{notation}

\begin{lemma}\label{lemaas}
	
The element $\theta_S(1)$ of $\pi_0((S,(-)_\dual)^{hC_2}[-1]) \cong \integers_2$
is a free generator of the $\integers_2$-module $\pi_0((S,(-)_\dual)^{hC_2}[-1])$, where $1 \in \pi_0(S) \cong \integers$ is the unit.	
	
\end{lemma}

\begin{proof}
	
For any spectrum $X$ there is a natural equivalence of grouplike $\bE_\infty$-spaces
$$ \Omega^\infty((X,(-)_\dual)^{hC_2}[-1]) \simeq \caS_*(BC_2,\Omega^\infty(X)) $$
covering the natural equivalence of grouplike $\bE_\infty$-spaces
$$ \Omega^\infty(X^{BC_2}) \simeq {\caS}(BC_2,\Omega^\infty(X)). $$
under which for any $Z \in \Omega^\infty(X) $
the object $\theta_X(Z) \in \Omega^\infty((X,(-)_\dual)^{hC_2}[-1]) $ corresponds to a pointed map $\alpha^Z_X: $ 
$\ BC_2 \to \Omega^\infty(X)$.

We first show that $\pi_1(\alpha^1_S): \pi_1(B C_2) \cong C_2 \to \pi_1(S) \cong C_2$
is an isomorphism.
By naturality the map $\alpha^{S[1]}_{\pic(S)}$ factors as $B C_2 \xrightarrow{\alpha_S^1} \Omega^\infty(S) \to \Omega^\infty(\pic(S)).$ 
Thus $\pi_1(\alpha^{S[1]}_{\pic(S)})$ factors as $$B C_2 \xrightarrow{\pi_1(\alpha_S^1)} \pi_1(S) \cong C_2 \to \pi_1(\pic(S)) \cong 
\pi_0(S)^\times \cong \integers^\times \cong C_2, $$ where the last map is the identity.
Since $\tau_{\pic(S)}(S[1]) \simeq S^{1,1\sigma} \in \pi_0(\pic(S)^{B C_2}) \cong \pi_0(\Pic(S)[C_2]),$
the map $\pi_1(\alpha^{S[1]}_{\pic(S)})$ sends the generator of $C_2$
to the action map $S^2 \to S^2$ provided by $S^{1,1\sigma}.$
So $\alpha^1_S$ induces the zero map on fundamental groups.

By Remark \ref{remu} there is a canonical isomorphism  $ \integers_2 \cong \pi_0((S,(-)_\dual)^{hC_2}[-1]).$
Let $g$ the image of $1$ under this isomorphism.
There is a group homomorphism
$$\rho: \integers_2 \cong \pi_0((S,(-)_\dual)^{hC_2}[-1]) \cong \pi_0({\caS_*}(BC_2,\Omega^\infty(S))) \xrightarrow{\pi_1(-)} \hom(C_2,C_2).$$
If $\theta_S(1)$ is no free generator of the $\integers_2$-module $\pi_0((S,(-)_\dual)^{hC_2}[-1])$, then $\theta_S(1)$ lies in the maximal ideal of the local ring $\integers_2$ so that $\theta_S(1) = 2 \lambda g $ for some $\lambda \in \integers_2.$
Thus $\rho$ sends $\theta_S(1)= 2 \lambda g$ to a multiple of 2, which is zero.
In other words $\alpha^1_S$
induces the zero map on fundamental groups.

\end{proof}

\begin{notation}
The map of spectra $S \to \pic(S)$ sending $1$ to the element $S[1]$ 
induces a homomorphism 
$$\xi: \integers \oplus \integers_2 \cong \pi_0(S^{BC_2}) \to \pi_0(\pic(S)^{BC_2}) \cong \pi_0(\Pic(\Sp[C_2])).$$
\end{notation}

The functor $\nu: \Sp^{C_2} \to \Sp[C_2]$
that forgets along the functor $BC_2 \subset \Fin[C_2] \to \Span(\Fin[C_2])$
admits a fully faithul right adjoint $ \iota: \Sp[C_2] \to \Sp^{C_2}$ \cite[Proposition 6.17.]{zbMATH06652659} such that $b:= \iota\circ \nu$, which we call Borel-completion, is $\Sp^{C_2}$-linear.

\begin{lemma}\label{leomort}
The restriction 
$$\xi': \integers \oplus \integers \subset \integers \oplus \integers_2 \cong \pi_0(S^{BC_2}) \to \pi_0(\pic(S)^{BC_2})\cong \pi_0(\Pic(\Sp[C_2]))$$
sends $(n,m)$ to $\nu(S^{(n-m)+m\sigma}),$ where $\sigma$ is the sign representation.
\end{lemma}

\begin{proof}
	
The restriction of $\xi'$ to the first summand factors as
$$\integers \cong \pi_0(S) \to \pi_0(S^{BC_2}) \to \pi_0(\pic(S)^{BC_2})\cong \pi_0(\Pic(\Sp[C_2])),$$
which is the map 
$$\integers \cong \pi_0(S) \cong \pi_0(\pic(S))\cong \pi_0(\Pic(\Sp)) \to \pi_0(\pic(S)^{BC_2})\cong \pi_0(\Pic(\Sp[C_2])) $$
sending $n \in \integers $ to $S^n \simeq \nu(S^{n+0\sigma}).$

The composition $$ \pi_0(\pic(S)^{B C_2}) \xrightarrow{\gamma}  \pi_0((\pic(S),(-)_\dual)^{hC_2}[-1]) \hookrightarrow \pi_0(\pic(S)^{B C_2})
$$ is the map $\id - \delta p, $ where $p$ is the forgetful functor
$\pi_0(\pic(S)^{B C_2}) \to \pi_0(\pic(S))$
and $\delta: \pi_0(\pic(S)) \to \pi_0(\pic(S)^{B C_2})$ is induced by the map
$B C_2 \to *.$

Since $\tau_{\pic(S)}(S[1]) \simeq \nu(S^{1,1\sigma}) \in \pi_0(\pic(S)^{B C_2}),$ the map $\gamma$ sends $\nu(S^{1,1\sigma})$ to $\theta_{\pic(S)}(S[1]).$
Consequently, the image of $\theta_{\pic(S)}(S[1])$ under the inclusion $$\pi_0((\pic(S), (-)_\dual)^{hC_2}[-1]) \hookrightarrow \pi_0(\pic(S)^{BC_2})\cong \pi_0(\Pic(\Sp[C_2]))$$ is $\nu(S^{1,1\sigma} \wedge S^{-2}) \simeq \nu(S^{-1,1\sigma}).$
The restriction of $\xi'$ to the second summand factors as
$$\integers \subset \integers_2 \cong \pi_0((S, (-)_\dual)^{hC_2}[-1]) \to \pi_0((\pic(S), (-)_\dual)^{hC_2}[-1]) $$$$\hookrightarrow \pi_0(\pic(S)^{BC_2})\cong \pi_0(\Pic(\Sp[C_2])).$$
By Lemma \ref{lemaas} the map $\integers \subset \integers_2 \cong \pi_0((S, (-)_\dual)^{hC_2}[-1])$ sends 1 to $\theta_S(1)$, which is sent by the map
$\pi_0((S, (-)_\dual)^{hC_2}[-1]) \to \pi_0((\pic(S), (-)_\dual)^{hC_2}[-1])$
to the element $\theta_{\pic(S)}(S[1])$. 
Hence the restriction of $\xi'$ to the second summand sends 1 to $\nu(S^{-1,1\sigma}).$

\end{proof}

We learned the proof of the following proposition from Niko Naumann:

\begin{proposition}\label{hteetr5}
The homomorphism $$\xi: \integers \oplus \integers_2 \cong \pi_0(S^{BC_2}) \to \pi_0(\pic(S)^{BC_2})$$ is injective.
\end{proposition}

\begin{proof}
By Remark \ref{split} the homomorphism $\xi$ is the sum $\alpha \oplus \beta$,
where $\alpha: \pi_0(S)\cong \integers \to \pi_0(\pic(S)) \cong \integers$ is
the canonical isomorphism and $$\beta: \integers_2 \cong \pi_0((S, (-)_\dual)^{hC_2}[-1]) \to \pi_0((\pic(S), (-)_\dual)^{hC_2}[-1])$$
is the canonical homomorphism.

The homomorphism $\beta$ is a map of pro-finite abelian groups and thus is injective if the restriction $\beta': \integers \subset \integers_2 \xrightarrow{\beta}\pi_0((\pic(S), (-)_\dual)^{hC_2}[-1])$
is injective: indeed, the kernel of $\beta$ is closed and therefore of the form
$2^\ell \integers_2$ for $\ell \geq 0$ if $\beta$ is not injective.
Hence in this case the kernel of $\beta'$ is of the form $2^\ell \integers$.
Consequently, it is enough to see that the restriction $\xi'=\alpha\oplus \beta'$
of $\xi$ to $\integers \oplus \integers$ is injective.

By Lemma \ref{leomort} the restriction $\xi'$ sends $(k,n)\in \integers \oplus \integers$ to $\nu(S^{(k-n)+n\sigma}).$ 
Let $k,n \in \integers$ such that $\nu(S^{(k-n)+n\sigma}) \simeq \nu(S^{0+0\sigma}).$
Set $m:= k-n.$ We like to see that $n=m=0$ so that also $k=0.$
Let $b: \Sp^{C_2} \to \Sp^{C_2}$ be the Borel completion, which factors as $
\Sp^{C_2} \xrightarrow{\nu} \Sp[C_2] \xrightarrow{\iota}\Sp^{C_2}$
and is $\Sp^{C_2}$-linear.
Let $\widetilde{E}_{C_2} := \colim_\ell S^{0+\ell\sigma}$ be the filtered colimit of genuine $C_2$-spectra.
For any genuine $C_2$-spectrum $X$ the Tate construction
$t(X)$ of $X$ is equivalent to the $C_2$-fixed points of the $C_2$-spectrum
$\widetilde{E}_{C_2} \wedge b(X).$
Since $\nu(S^{m+n\sigma}) \simeq\nu(S^{0+0\sigma}), $ we find that $b(S^{m+n\sigma}) \simeq b(S^{0+0\sigma}).$
We obtain a canonical equivalence
$$ \widetilde{E}_{C_2} \wedge b(S^{m+0\sigma}) \simeq \colim_\ell (S^{0+\ell\sigma} \wedge b(S^{m+0\sigma})) \simeq \colim_\ell b(S^{m+\ell\sigma}) \simeq \colim_\ell b(S^{m+(n+\ell)\sigma}) $$$$ \simeq \colim_\ell (S^{0+\ell\sigma} \wedge b(S^{m+n\sigma})) \simeq \widetilde{E}_{C_2} \wedge b(S^{m+n\sigma}) \simeq
\widetilde{E}_{C_2} \wedge b(S^{0+0\sigma}).$$
Applying $C_2$-fixed points we obtain an equivalence
of spectra $t(S^{0+0\sigma})[m] \simeq t(S^{m+0\sigma}) \simeq t(S^{0+0\sigma})$.
To deduce that $m=0$ and thus also $n=0$, it is enough to see that $ t(S^{0+0\sigma})$ is non-trivial and bounded below. 

Since the $ t(S^{0+0\sigma})$-module $t(\bF_2)$ is not trivial, $ t(S^{0+0\sigma})$ is not trivial.
To see that $ t(S^{0+0\sigma}) $ is bounded below,
we remark that $ t(S^{0+0\sigma}) $ is connective:
by Lin's Theorem \cite{lin} the spectrum $S^{BC_2}$ is connective.
Thus the spectrum $ t(S^{0+0\sigma}) $ is the cofiber of the following map of connective spectra:
$$(S^{0+0\sigma})_{hC_2} \to (S^{0+0\sigma})^{hC_2} \simeq S^{BC_2}.$$
\end{proof}

We obtain the following corollaries:

\begin{corollary}\label{inco}
The cardinality of the space $\pi_0((\pic(S)[1], (-)^\dual)^{hC_2})$ of dualities 
on the $\infty$-category of dualizable spectra (Lemma \ref{ppp}) is the continuum.
\end{corollary}

\begin{proof}
By Proposition \ref{htewegf} there is an exact sequence
$$ \pic(S) \to \pic(S^{BC_2}) \to (\pic(S)[1], (-)^\dual)^{hC_2}$$
that gives rise to an exact sequence
$$ \pi_0(\pic(S))=\integers \to \pi_0(\pic(S^{BC_2})) \to \pi_0((\pic(S)[1], (-)^\dual)^{hC_2}) \to \pi_{-1}(\pic(S)) = 0.$$
By Proposition \ref{hteetr5} the group $\pi_0(\pic(S^{BC_2})) $ contains the uncountable group $\integers \oplus \integers_2$ and so is uncountable.
Since $\integers$ is countable and $\pi_0(\pic(S^{BC_2}))$ is uncountable, we find that 
the cokernel $\pi_0((\pic(S)[1], (-)^\dual)^{hC_2})$ is uncountable.
\end{proof}


\begin{corollary}
The element $(S,(-)^\dual) \in \pi_0(\pic(\Sp[C_2])) \cong \pi_0(\Pic(S)^{BC_2})$ has infinite order.
	
\end{corollary}

\begin{proof}
	
By Lemma \ref{leomort} the map 
$$ \integers \oplus \integers_2 \cong \pi_0(S^{BC_2}) \to \pi_0(\pic(S)^{BC_2})\cong \pi_0(\Pic(\Sp[C_2]))$$
sends $(0,1)$ to $\nu(S^{-1+\sigma}),$ where $\sigma$ is the sign representation.
By Proposition \ref{hteetr5} the latter map is injective. The element $(0,1) \in \integers \oplus \integers_2 $ has infinite order so that $\nu(S^{-1+\sigma}) \in \pi_0(\Pic(\Sp[C_2]))$ has infinite order.
Note that $\nu(S^{1+\sigma}) \simeq (\widetilde{S[1]\times S[1]})$ in $\Pic(\Sp[C_2])$.
By Corollary \ref{sosol} we have $$ \nu(S^{-1+\sigma}) \simeq\nu(S^{1+\sigma})[-2] \simeq  (\widetilde{S[1]\times S[1]})[-2]\simeq (S,(-)^\dual)$$ in $\Pic(\Sp[C_2])$.
\end{proof}

\begin{lemma}
Let $n \geq 0$ and $R$ a $n$-truncated $\bE_\infty$-ring spectrum. The order of the element $$(R,(-)^\dual) \in \pi_0(\Pic(\Mod_R[C_2])) \simeq \pi_0(\pic(R)^{BC_2})$$ is of the form $2^\ell$, where $0 \leq \ell \leq n+1.$
	
\end{lemma}

\begin{proof}
The canonical map of $\bE_\infty$-ring spectra $S_{\leq n} \to R$ gives rise to a map
$\pi_0(\Pic(S_{\leq n})^{BC_2}) \to \pi_0((\pic(R)^{BC_2})$ that sends
$(S_{\leq n},(-)^\dual)$ to $(R,(-)^\dual)$.
Consequently, it is enough to prove that the order of the element $$(S_{\leq n},(-)^\dual) \in \pi_0((\pic(S_{\leq n})^{BC_2})$$ is of the form $2^\ell$, where $0 \leq \ell \leq n+1$.
By Remark \ref{split} we have a split exact sequence $$\pi_0((\pic(S_{\leq n})_*^{BC_2}) \to \pi_0((\pic(S_{\leq n})^{BC_2}) \to \pi_0((\pic(S_{\leq n})).$$
Since $(S_{\leq n},(-)^\dual) \in \pi_0((\pic(S_{\leq n})^{BC_2})$ lies over the neutral element of the group $\pi_0((\pic(S_{\leq n}))$, it is enough to see that
the image of $(S_{\leq n},(-)^\dual) \in \pi_0((\pic(S_{\leq n})^{BC_2})$ in $\pi_0((\pic(S_{\leq n})_*^{BC_2})$
has order of the form $2^\ell$, where $0 \leq \ell \leq n+1$.
To see this, we prove by induction on $n \geq 0$ that every element of $\pi_0((\pic(S_{\leq n})_*^{BC_2})$ has order
of the form $2^\ell$, where $0 \leq \ell \leq n+1$.
For $n =0$ this follows from the fact that by Lemma \ref{aros} we have that
$\pi_0((\pic(\integers)_*^{BC_2}) \cong \integers/2 \integers$. 

Note that the map $\integers \cong \pi_0(\pic(S)) \to \pi_0(\pic(S_{\leq n})) \to \pi_0(\pic(\integers))\cong \integers $
is the identity so that $\pi_0(\pic(S_{\leq n})) \cong \integers \oplus F$, where $F$ is the kernel of the second map in the latter composition.
Let $\pic'(S_{\leq n})$ be the fiber of the map $$\pic(S_{\leq n}) \to H(\pi_0(\pic(S_{\leq n}))) \to H(F).$$
The long exact sequence on homotopy groups applied to the latter exact sequence gives an an exact sequence $$H(\pi_{n+1}(S))[n+2] \to \pic'(S_{\leq n+1}) \to \pic'(S_{\leq n}).$$
The latter exact sequence yields an exact sequence $$H(\pi_{n+1}(S))[n+2]_*^{BC_2} \to \pic'(S_{\leq n+1})^{BC_2}_*  \to \pic'(S_{\leq n})^{BC_2}_*, $$ which gives rise to an exact sequence
$$\pi_0(H(\pi_{n+1}(S))[n+2]^{BC_2}_*) \to \pi_0(\pic'(S_{\leq n+1})^{BC_2}_*) \to \pi_0(\pic'(S_{\leq n})^{BC_2}_*).$$

The exact sequence $\pic'(S_{\leq n})\to \pic(S_{\leq n}) \to H(F) $
yields an exact sequence $$\pic'(S_{\leq n})_*^{BC_2}\to \pic(S_{\leq n})_*^{BC_2} \to H(F)_*^{BC_2}$$
and so an exact sequence
$$\pi_1(H(F)_*^{BC_2}) \to \pi_0(\pic'(S_{\leq n})_*^{BC_2})\to \pi_0(\pic(S_{\leq n})_*^{BC_2}) \to \pi_0(H(F)_*^{BC_2}).$$
Since $H(F)$ is discrete, also $H(F)_*^{BC_2}$ is discrete so that $\pi_1(H(F)_*^{BC_2})=0.$
The split exact sequence $H(F)_*^{BC_2} \to H(F)^{BC_2} \to H(F)$ gives a split exact sequence $\pi_0(H(F)_*^{BC_2}) \to \pi_0(H(F)^{BC_2}) \to F,$ where the last map is an isomorphism.
So $\pi_0(H(F)_*^{BC_2})=0$ and the map $\pi_0(\pic'(S_{\leq n})_*^{BC_2})\to \pi_0(\pic(S_{\leq n})_*^{BC_2})$ is an isomorphism.
We obtain an exact sequence $$\pi_0(H(\pi_{n+1}(S))[n+2]^{BC_2}_*) \to \pi_0(\pic(S_{\leq n+1})^{BC_2}_*) \to \pi_0(\pic(S_{\leq n})^{BC_2}_*).$$

The abelian group $$\pi_0(H(\pi_{n+1}(S))[n+2]^{BC_2}_*) \cong \pi_{-n-2}(H(\pi_{n+1}(S))_*^{BC_2}) \cong \pi_{-n-2}(H(\pi_{n+1}(S))^{BC_2})$$ is the $n+2$-th group cohomology of an abelian group, which is a $\bF_2$-vector space.
So the claim follows from the fact that for every exact sequence of abelian groups $A \to B \to C$, where every element of $A$ has order $\leq k$ and every element of $B$ has order $\leq r$ we have that every element of $C$ has order $\leq k$ and $\leq r.$
	
\end{proof}

\bibliographystyle{plain}
\bibliography{Herm} 

\end{document}